\newtheorem{theorem}{Theorem}[section]
\newtheorem{corollary}[theorem]{Corollary}
\newtheorem{lemma}[theorem]{Lemma}
\newtheorem{question}[theorem]{Question}
\theoremstyle{definition}
\newtheorem{definition}[theorem]{Definition}
\begin{document}
	
\title{  Essential $\mathcal{F}$-sets of $\mathbb{N}$ under nonhomogeneous spectra}

\date{}
\author{Pintu Debnath
	\footnote{Department of Mathematics,
		Basirhat College,
		Basirhat-743412, North 24th parganas, West Bengal, India.\hfill\break
		{\tt pintumath1989@gmail.com}}
}
\maketitle
	
\begin{abstract}
	 Let $\alpha>0$ and $0<\gamma<1$. Define $g_{\alpha,\gamma}:\mathbb{N}\rightarrow\mathbb{N}$
	by $g_{\alpha,\gamma}\left(n\right)=\lfloor\alpha n+\gamma\rfloor$.
	The set $\left\{ g_{\alpha,\gamma}\left(n\right):n\in\mathbb{N}\right\} $
	is called the nonhomogeneous spectrum of $\alpha$ and $\gamma$.
	We refer to the maps $g_{\alpha,\gamma}$ as nonhomogeneous spectra. In \cite{BHK}, 
	Bergelson, Hindman and Kra showed that if $A$ is an $IP$-set, a
	central set, an $IP^{\star}$-set, or a central$^{\star}$-set, then
	$g_{\alpha,\gamma}\left[A\right]$ is the corresponding objects. Hindman
	and Johnsons extended this result to include several other notions
	of largeness: $C$-sets, $J$-sets, strongly central sets, piecwise
	syndetic sets,  $AP$-sets syndetic set, $C^{\star}$-sets, strongly
	central$^{\star}$- sets . In \cite{DHS},  De, Hindman and Strauss introduced $C$-set
	and $J$-set and showed that $C$-sets satisfy the conclusion of the
	Central Sets Theorem. To prepare this article, we have been strongly   motivated by the fact that $C$-sets are essential
	$\mathcal{J}$-sets. In this article,  we prove some new results regarding nonhomogeneous
	spectra of essential $\mathcal{F}$-sets for shift invariant $\mathcal{F}$-sets. We have  a special interest in the family $\mathcal{HSD}=\left\{ A\subseteq\mathbb{N}:\sum_{n\in A}\frac{1}{n}=\infty\right\} $ as this family is directly connected with the  famous Erdős sum of reciprocal conjecture and as a consequence we get $g_{\alpha,\gamma}\left[\mathbb{P}\right]\in\mathcal{HSD}$, where $\mathbb{P}$ is the set of prime numbers in $\mathbb{N}$. Throughout this article, we use some elementary techniques and algebra of the Stone-\v{C}ech compactifications of discrete semigroups.
\end{abstract}
\textbf{Keywords:} Ramsey family, Algebra of the  Stone-\v{C}ech compactifications of discrete semigroup\\
\textbf{MSC 2020:} 05D10, 22A15, 54D35
\section{Introduction}
Given a positive real number $\alpha$, the set $\left\{ \lfloor n\alpha\rfloor:n\in\mathbb{N}\right\} $
is called the spectrum of $\alpha$. Numerous results about spectra
were derived by Skolem \cite{Sk} and Bang \cite{B}. In \cite{Sk2}
Skolem introduced the more general sets $\left\{ \lfloor n\alpha+\gamma\rfloor:n\in\mathbb{N}\right\} $,
determining for example when two such sets can be disjoint. In \cite{BHK}
V. Bergelson , N. Hindman and B. Kra showed that if $\alpha>0$, $0<\gamma<1$,
and a subset $A$ is an $IP$-set, a central set, an $IP^{\star}$-set,
or a central$^{\star}$-set, then $g_{\alpha,\gamma}\left[A\right]$
is the corresponding objects. It is trivial for each $n\in\mathbb{N}$,
$n\mathbb{N}$ is an $IP^{\star}$-set. It is not so trivial that
$\left\{ \lfloor\sqrt{7}\lfloor n\pi+\frac{1}{e}\rfloor+\frac{2}{3}\rfloor:n\in\mathbb{N}\right\} $
is an $IP^{\star}$-set. In \cite{HJ} Hindman and Johnson proved
that $C$-sets, $J$-sets, strongly central sets, piecwise syndetic
sets $AP$-sets syndetic set, $C^{\star}$-sets, strongly central$^{\star}$-
sets preserve under nonhomogeneous spectra. Before the end of this section we will mention what we want to do but we need to discuss some definitions and algebraic preliminaries first.

A collection $\mathcal{F\subseteq P}\left(S\right)\setminus\left\{ \emptyset\right\} $
is upward hereditary if whenever $A\in\mathcal{F}$ and $A\subseteq B\subseteq S$
then it follows that $B\in\mathcal{F}$. A nonempty and upward hereditary
collection $\mathcal{F\subseteq P}\left(S\right)\setminus\left\{ \emptyset\right\} $
will be called a family. If $\mathcal{F}$ is a family, the dual family
$\mathcal{F}^{*}$ is given by, $\mathcal{F}^{*}=\{E\subseteq S:\forall A\in\mathcal{F},E\cap A\neq\emptyset\}$.
A family $\mathcal{F}$ possesses the Ramsey property if whenever
$A\in\mathcal{F}$ and $A=A_{1}\cup A_{2}$ there is some $i\in\left\{ 1,2\right\} $
such that $A_{i}\in\mathcal{F}$.

Before giving some examples of families with Ramsey property, 
We give some definitions that have noncommutative versions, but here we define only for commutative semigroups those are enough for this article.
\begin{definition}
	Let $\left(S,+\right)$ be a commutative semigroup and $A\subseteq S$.
	\begin{enumerate}
		\item The set $A$ is $IP$-set if and only if  there exists a sequence $\left\{ x_{n}\right\} _{n=1}^{\infty}$ in $S$ such that $FS\left(\left\{ x_{n}\right\} _{n=1}^{\infty}\right)\subseteq A$. Where $$ FS\left(\left\{ x_{n}\right\} _{n=1}^{\infty}\right)=\left\{ \sum_{n\in F}x_{n}:F\in\mathcal{P}_{f}\left(\mathbb{N}\right)\right\}\,. $$
		\item The set $A$ is piecewise syndetic set if there is a some $G\in\mathcal{P}_{f}\left(S\right)$ such that for any $F\in\mathcal{P}_{f}\left(S\right)$ there is some $x\in S$ such that $$F+x\subset \bigcup_{t\in G}\left(-t+A\right)\,,$$ where $-t+A=\left\{ s\in S:t+s\in A\right\}$.
		\item The set $A$ is $J$-set if  for every $F\in\mathcal{P}_{f}\left({}^\mathbb{N}S\right)$, there exist $a\in S$ and $H\in\mathcal{P}_{f}\left(\mathbb{N}\right)$ such that for each $f\in F$, $$a+\sum_{n\in H}f(n)\in A\,.$$
	\end{enumerate}
\end{definition}

Let us talk about positive density sets of $\mathbb{N}$. Unmodified
positive density standardly means positive asymptotic density, and
set of subsets of $\mathbb{N}$ with positive asymptotic density is
not a family since it is not closed under passage to supersets. (The
asymptotic density of $A\subseteq\mathbb{N}$ is $$d\left(A\right)=lim_{n\rightarrow\infty}\frac{\mid A\cap\left\{ 1,2,\ldots,n\right\} \mid}{n}$$
provided that limit exists and undefined otherwise.). But $\mathcal{F}$
is the family of subsets of $\mathbb{N}$ with positive upper asymptotic
density.
\begin{definition}
	Let $A$ be a subset of $\mathbb{N}$
	\begin{enumerate}
		\item The set $A$ is positive upper asymptotic  density if $\overline{d}\left(A\right)>0$, where $$\overline{d}\left(A\right)=\limsup_{n\rightarrow\infty}\frac{\mid A\cap\left\{ 1,2,\ldots,n\right\} \mid}{n}\,.$$
		\item The set $A$ is positive upper banach  density if  $d^\star\left(A\right)>0$, where $$d^\star\left(A\right)=\limsup_{(n-m)\rightarrow\infty}\frac{\mid A\cap\left\{ n+1,n+2,\ldots,m\right\} \mid}{n-m}\,.$$
		
	\end{enumerate}
\end{definition}

There are many families $\mathcal{F}$ with Ramsay property, where
for uniformity we consider following families for $\mathbb{\mathbb{N}}$.
\begin{itemize}
	\item The infinite sets or $IF$-sets,
	\item The piecewise syndetic sets or $PS$-sets,
	\item The sets of positive upper asymptotic density or $PUAD$-sets,
	\item The sets of positive upper banach density or $PUBD$-sets,
	\item The set containing arbitrary large arithmetic progression or $AP$-sets
	\item The set with property that $\sum_{n\in A}\frac{1}{n}=\infty$ or $HSD$-sets 
	\item The $J$-sets,
	\item The $IP$-sets.
\end{itemize}
We give a brief review of algebraic structure of the Stone-\v{C}ech
compactification of a semigroup $\left(S,\cdot\right)$, not necessarily commutative with the discrete topology.

The set $\{\overline{A}:A\subset S\}$ is a basis for the closed sets
of $\beta S$. The operation `$\cdot$' on $S$ can be extended to
the Stone-\v{C}ech compactification $\beta S$ of $S$ so that $(\beta S,\cdot)$
is a compact right topological semigroup (meaning that for each    $p\in\beta$ S the function $\rho_{p}\left(q\right):\beta S\rightarrow\beta S$ defined by $\rho_{p}\left(q\right)=q\cdot p$ 
is continuous) with $S$ contained in its topological center (meaning
that for any $x\in S$, the function $\lambda_{x}:\beta S\rightarrow\beta S$
defined by $\lambda_{x}(q)=x\cdot q$ is continuous). This is a famous
Theorem due to Ellis that if $S$ is a compact right topological semigroup
then the set of idempotents $E\left(S\right)\neq\emptyset$. A nonempty
subset $I$ of a semigroup $T$ is called a $\textit{left ideal}$
of $S$ if $TI\subset I$, a $\textit{right ideal}$ if $IT\subset I$,
and a $\textit{two sided ideal}$ (or simply an $\textit{ideal}$)
if it is both a left and right ideal. A $\textit{minimal left ideal}$
is the left ideal that does not contain any proper left ideal. Similarly,
we can define $\textit{minimal right ideal}$ and $\textit{smallest ideal}$.

Any compact Hausdorff right topological semigroup $T$ has the smallest
two sided ideal

$$
\begin{aligned}
	K(T) & =  \bigcup\{L:L\text{ is a minimal left ideal of }T\}\\
	 &=  \bigcup\{R:R\text{ is a minimal right ideal of }T\}.
\end{aligned}$$

Given a minimal left ideal $L$ and a minimal right ideal $R$, $L\cap R$
is a group, and in particular contains an idempotent. If $p$ and
$q$ are idempotents in $T$ we write $p\leq q$ if and only if $pq=qp=p$.
An idempotent is minimal with respect to this relation if and only
if it is a member of the smallest ideal $K(T)$ of $T$. Given $p,q\in\beta S$
and $A\subseteq S$, $A\in p\cdot q$ if and only if the set $\{x\in S:x^{-1}A\in q\}\in p$,
where $x^{-1}A=\{y\in S:x\cdot y\in A\}$. See \cite{HS} for
an elementary introduction to the algebra of $\beta S$ and for any
unfamiliar details.

It will be easy to check that the family$\mathcal{F}$ has the Ramsey
property iff the family $\mathcal{F}^{*}$ is a filter. For a family
$\mathcal{F}$ with the Ramsey property, let $\beta(\mathcal{F})=\{p\in\beta S:p\subseteq\mathcal{F}\}$.
Then the following from \cite[Theorem 5.1.1]{C}:
\begin{theorem}
	Let $S$ be a discrete set. For every family $\mathcal{F\subseteq P}\left(S\right)$
	with the Ramsay property, $\beta\left(\mathcal{F}\right)\subseteq\beta S$
	is closed. Furthermore, $\mathcal{F}=\cup\beta\left(\mathcal{F}\right)$.
	Also if $K\subseteq\beta S$ is closed, $\mathcal{F}_{K}=\left\{ E\subseteq S:\overline{E}\cap K\neq\emptyset\right\} $
	is a family with the Ramsay property and $\overline{K}=\beta\left(\mathcal{F}_{K}\right)$.
\end{theorem}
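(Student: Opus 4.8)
The plan is to prove the four assertions in turn, using throughout only that the clopen sets $\overline{A}=\{p\in\beta S:A\in p\}$, $A\subseteq S$, form a basis for $\beta S$ — so $\overline{A\cup B}=\overline{A}\cup\overline{B}$, $\overline{S\setminus A}=\beta S\setminus\overline{A}$, $\overline{A}=\emptyset$ exactly when $A=\emptyset$, and $A\mapsto\overline{A}$ preserves inclusion — together with the equivalence already recorded in the text, that $\mathcal{F}$ has the Ramsey property if and only if $\mathcal{F}^{*}$ is a filter. For the closedness of $\beta(\mathcal{F})$: if $p\in\beta S\setminus\beta(\mathcal{F})$, choose $A\in p$ with $A\notin\mathcal{F}$; then $\overline{A}$ is a neighbourhood of $p$ disjoint from $\beta(\mathcal{F})$, since every $q\in\overline{A}$ contains $A$ and hence fails $q\subseteq\mathcal{F}$. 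So the complement of $\beta(\mathcal{F})$ is open. (This step uses nothing about $\mathcal{F}$ beyond $\mathcal{F}\subseteq\mathcal{P}(S)$.)

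Next I would establish $\mathcal{F}=\bigcup\beta(\mathcal{F})$. The inclusion $\bigcup\beta(\mathcal{F})\subseteq\mathcal{F}$ is immediate, a member of some $p\subseteq\mathcal{F}$ being a member of $\mathcal{F}$. For the reverse inclusion, fix $A\in\mathcal{F}$; note $A\neq\emptyset$ since $\mathcal{F}\subseteq\mathcal{P}(S)\setminus\{\emptyset\}$. As $\mathcal{F}^{*}$ is a filter and, by definition of the dual, $A$ meets every member of $\mathcal{F}^{*}$, the collection $\{A\}\cup\mathcal{F}^{*}$ has the finite intersection property, so it extends to an ultrafilter $p$; then $A\in p$ and $\mathcal{F}^{*}\subseteq p$. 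It remains to check $p\subseteq\mathcal{F}$: if some $B\in p$ lay outside $\mathcal{F}$, then by upward heredity no member of $\mathcal{F}$ is contained in $B$, so $S\setminus B$ meets every member of $\mathcal{F}$, giving $S\setminus B\in\mathcal{F}^{*}\subseteq p$, incompatible with $B\in p$. Hence $p\in\beta(\mathcal{F})$ and $A\in\bigcup\beta(\mathcal{F})$. (In effect this shows $\beta(\mathcal{F})=\{p\in\beta S:\mathcal{F}^{*}\subseteq p\}$.)

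For the last two assertions I would take $K\subseteq\beta S$ closed and nonempty, the nonemptiness being needed for $\mathcal{F}_{K}$ to be a family at all. That $\mathcal{F}_{K}$ is a family is routine bookkeeping: $S\in\mathcal{F}_{K}$ because $\overline{S}=\beta S\supseteq K$, $\emptyset\notin\mathcal{F}_{K}$ because $\overline{\emptyset}=\emptyset$, and monotonicity of $A\mapsto\overline{A}$ gives upward heredity. The Ramsey property is $\overline{E_{1}\cup E_{2}}=\overline{E_{1}}\cup\overline{E_{2}}$ in disguise: any point of $\overline{E_{1}\cup E_{2}}\cap K$ lies in some $\overline{E_{i}}\cap K$, so $E_{i}\in\mathcal{F}_{K}$. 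Finally I would prove $K=\beta(\mathcal{F}_{K})$, which is what ``$\overline{K}=\beta(\mathcal{F}_{K})$'' asserts since $K$ is closed. If $p\in K$ then each $A\in p$ gives $p\in\overline{A}\cap K$, so $A\in\mathcal{F}_{K}$ and $p\subseteq\mathcal{F}_{K}$, i.e.\ $p\in\beta(\mathcal{F}_{K})$. Conversely, if $p\notin K$, then because $K$ is closed and the clopen sets $\overline{A}$ form a basis there is an $A$ with $p\in\overline{A}$ and $\overline{A}\cap K=\emptyset$; then $A\in p$ but $A\notin\mathcal{F}_{K}$, so $p\notin\beta(\mathcal{F}_{K})$.

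The one step that genuinely uses the family axioms beyond the Ramsey/filter equivalence is the verification in the second part that the ultrafilter extending $\{A\}\cup\mathcal{F}^{*}$ has all of its members in $\mathcal{F}$ — and there upward heredity does precisely the work needed, via the passage from ``$B\notin\mathcal{F}$'' to ``$S\setminus B\in\mathcal{F}^{*}$''. I expect that to be the only point requiring real care; everything else is manipulation of the basic clopen sets of $\beta S$.
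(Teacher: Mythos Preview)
Your proof is correct and complete. Note, however, that the paper itself does not supply a proof of this theorem: it is quoted from Christopherson's thesis \cite[Theorem 5.1.1]{C}, so there is no in-paper argument to compare against. Your approach---reducing closedness to a clopen neighbourhood argument, obtaining $\mathcal{F}=\bigcup\beta(\mathcal{F})$ by extending $\{A\}\cup\mathcal{F}^{*}$ to an ultrafilter and then using upward heredity to show that ultrafilter lies in $\beta(\mathcal{F})$, and handling the converse direction via the clopen basis---is the standard one and matches what one finds in the literature. Your observation that $K$ must be assumed nonempty for $\mathcal{F}_{K}$ to be a family is well taken; the theorem as stated tacitly requires this.
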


Let $S$ be a discrete semigroup, then for every family $\mathcal{F\subseteq P}\left(S\right)$
with the Ramsay property, $\beta\left(\mathcal{F}\right)\subseteq\beta S$
is closed. If $\beta\left(\mathcal{F}\right)$ be a subsemigroup of
$\beta S$, then $E\left(\beta\mathcal{F}\right)\neq\emptyset$. But
may not be subsemigroup. For example, let $\mathcal{F}=\mathcal{IP}$,
the family of IP- sets. It is easy to show that $\beta\left(\mathcal{F}\right)=\beta\left(\mathcal{IP}\right)=E\left(\beta S\right)$.
But $E\left(\beta S\right)$ is not a subsemigroup of $\beta S$.
\begin{definition}[Essential $\mathcal{F}$-set]
	Let $\mathcal{F}$ be a family with Ramsay property such that $\beta(\mathcal{F})$
	is a subsemigroup of $\beta S$ and $p$ be an idempotent in $\beta(\mathcal{F})$,
	then each member of $p$ is called essential $\mathcal{F}$-set. And
	$A\subset S$ is called essential $\mathcal{F}^{\star}$-set if $A$
	intersects with all essential $\mathcal{F}$-sets.
\end{definition}

The family $\mathcal{F}$ is called left  shift-invariant if
for all $s\in S$ and all $E\in\mathcal{F}$ one has $sE\in\mathcal{F}$ ($sE=\left\{ st:t\in E\right\} $).
The family $\mathcal{F}$ is called left inverse shift-invariant
if for all $s\in S$ and all $E\in\mathcal{F}$ one has $s^{-1}E\in\mathcal{F}$ ($s^{-1}E=\left\{ t:st\in E\right\} $).
From \cite[Theorem 5.1.2]{C}:
\begin{theorem}
	If $\mathcal{F}$ is a family having the Ramsey property then $\beta\mathcal{F}\subseteq\beta S$
	is a left ideal if and only if $\mathcal{F}$ is left shift-invariant.
	
\end{theorem}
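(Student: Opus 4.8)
The plan is to prove both implications directly from the defining identity for the extended product on $\beta S$, namely that for $p,q\in\beta S$ and $A\subseteq S$ one has $A\in q\cdot p$ if and only if $\{x\in S:x^{-1}A\in p\}\in q$, together with the fact (from the first displayed theorem) that $\mathcal{F}=\bigcup\beta(\mathcal{F})$ and the upward heredity of $\mathcal{F}$. Two elementary set inclusions do all the work: for $x\in S$ one has $x\,(x^{-1}A)\subseteq A$, and for $E\subseteq S$ one has $E\subseteq s^{-1}(sE)$.

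For the forward direction, I would assume $\mathcal{F}$ is left shift-invariant and show $\beta S\cdot\beta(\mathcal{F})\subseteq\beta(\mathcal{F})$. Fix $q\in\beta S$ and $p\in\beta(\mathcal{F})$, and let $A\in q\cdot p$ be arbitrary. Then $\{x\in S:x^{-1}A\in p\}$ lies in the ultrafilter $q$, hence is nonempty, so we may pick $x$ with $x^{-1}A\in p$. Since $p\subseteq\mathcal{F}$ we get $x^{-1}A\in\mathcal{F}$, and then left shift-invariance gives $x\,(x^{-1}A)\in\mathcal{F}$. As $x^{-1}A$ is nonempty (being a member of an ultrafilter), so is $x\,(x^{-1}A)$, and $x\,(x^{-1}A)\subseteq A$; upward heredity of $\mathcal{F}$ then yields $A\in\mathcal{F}$. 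Since $A$ was arbitrary in $q\cdot p$, we conclude $q\cdot p\subseteq\mathcal{F}$, i.e. $q\cdot p\in\beta(\mathcal{F})$, so $\beta(\mathcal{F})$ is a left ideal.

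For the converse, I would assume $\beta(\mathcal{F})$ is a left ideal and fix $s\in S$ and $E\in\mathcal{F}$. By the first theorem $\mathcal{F}=\bigcup\beta(\mathcal{F})$, so there is $p\in\beta(\mathcal{F})$ with $E\in p$. Identifying $s$ with the principal ultrafilter it generates, the product $s\cdot p$ equals $\{A\subseteq S:s^{-1}A\in p\}$. Since $E\subseteq s^{-1}(sE)$ and $E\in p$, we get $s^{-1}(sE)\in p$, hence $sE\in s\cdot p$. Because $\beta(\mathcal{F})$ is a left ideal, $s\cdot p\in\beta(\mathcal{F})$, so $s\cdot p\subseteq\mathcal{F}$ and therefore $sE\in\mathcal{F}$. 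Thus $\mathcal{F}$ is left shift-invariant.

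There is no deep obstacle here; the only points demanding care are the correct use of the product formula on $\beta S$, the two inclusion identities $x\,(x^{-1}A)\subseteq A$ and $E\subseteq s^{-1}(sE)$, and keeping track of nonemptiness so that upward heredity may legitimately be applied (both safeguarded by the conventions that ultrafilters omit $\emptyset$ and that a family is contained in $\mathcal{P}(S)\setminus\{\emptyset\}$).
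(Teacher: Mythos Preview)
Your proof is correct. Note, however, that the paper does not actually prove this statement: it is quoted without proof from \cite[Theorem 5.1.2]{C} (Christopherson's thesis), so there is no in-paper argument to compare against. Your argument is the standard one and is entirely self-contained given the product formula on $\beta S$ and the identity $\mathcal{F}=\bigcup\beta(\mathcal{F})$ recorded just above; the two inclusions $x(x^{-1}A)\subseteq A$ and $E\subseteq s^{-1}(sE)$ are exactly what is needed, and your care about nonemptiness is appropriate though in fact automatic once $x^{-1}A\in\mathcal{F}\subseteq\mathcal{P}(S)\setminus\{\emptyset\}$.
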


From\cite[Theorem 5.1.10]{C}, we can identifie those families
$\mathcal{F}$ with Ramsey property for which $\beta\left(\mathcal{F}\right)$
is a subsemigroup of $\beta S$ . The condition is a rather technical
weakening of left shift-invariance.
\begin{theorem}
	Let $S$ be any semigroup, and let $\mathcal{F}$ be a family of subsets
	of $S$ having the Ramsey property. Then the following are equivalent:
	
	\begin{itemize}
	
	\item[(1)]  $\beta\left(\mathcal{F}\right)$ is a subsemigroup of $\beta S$.
	
	\item[(2)]  $\mathcal{F}$ has the following property: If $E\subseteq S$
	is any set, and if there is $A\in\mathcal{F}$ such that for all finite
	$H\subseteq A$ one has $\left(\cap_{q\in H}x^{-1}E\right)\in\mathcal{F}$,
	then $E\in\mathcal{F}$.
	
    \end{itemize}
\end{theorem}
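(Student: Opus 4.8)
\noindent Throughout I read the intersection appearing in condition $(2)$ as $\bigcap_{x\in H}x^{-1}E$, $H$ ranging over finite subsets of $A$. The plan is to pass freely between the family $\mathcal{F}$ and the closed set $\beta(\mathcal{F})\subseteq\beta S$, using the representation $\mathcal{F}=\bigcup\beta(\mathcal{F})$ furnished by the first theorem quoted above; equivalently, $E\in\mathcal{F}$ if and only if $\overline{E}\cap\beta(\mathcal{F})\neq\emptyset$. Before handling either implication I would record one auxiliary \emph{selection lemma}: there is a single $q\in\beta(\mathcal{F})$ with $\mathcal{D}\subseteq q$ exactly when every finite subcollection of $\mathcal{D}$ has its intersection in $\mathcal{F}$. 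The forward direction is clear since $q\subseteq\mathcal{F}$ and $q$ is closed under finite intersections. For the converse I would use the standard identity $\overline{E_1}\cap\cdots\cap\overline{E_n}=\overline{E_1\cap\cdots\cap E_n}$ in $\beta S$: the sets $\{\overline{E}\cap\beta(\mathcal{F}):E\in\mathcal{D}\}$ then form a family of closed subsets of the compact space $\beta(\mathcal{F})$ with the finite intersection property, and compactness produces a point $q$ in the total intersection, i.e.\ an ultrafilter in $\beta(\mathcal{F})$ containing every member of $\mathcal{D}$. I will also use repeatedly the basic formula $E\in p\cdot q\iff\{x\in S:x^{-1}E\in q\}\in p$.

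For $(2)\Rightarrow(1)$ I would fix $p,q\in\beta(\mathcal{F})$ and an arbitrary $E\in p\cdot q$ and show $E\in\mathcal{F}$; since $E$ was an arbitrary member of $p\cdot q$ this gives $p\cdot q\subseteq\mathcal{F}$, that is, $p\cdot q\in\beta(\mathcal{F})$, which is exactly the assertion that $\beta(\mathcal{F})$ is a subsemigroup. Put $A=\{x\in S:x^{-1}E\in q\}$; by the formula $A\in p\subseteq\mathcal{F}$. For any finite $H\subseteq A$, each $x^{-1}E$ with $x\in H$ lies in $q$, hence $\bigcap_{x\in H}x^{-1}E\in q\subseteq\mathcal{F}$. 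Thus this $A$ is precisely a witness for the hypothesis of $(2)$ applied to the set $E$, and $(2)$ yields $E\in\mathcal{F}$, as needed.

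For $(1)\Rightarrow(2)$, suppose $E\subseteq S$ and $A\in\mathcal{F}$ are such that $\bigcap_{x\in H}x^{-1}E\in\mathcal{F}$ for every finite $H\subseteq A$. Apply the selection lemma to $\mathcal{D}=\{x^{-1}E:x\in A\}$: every finite subcollection of $\mathcal{D}$ is of the form $\{x^{-1}E:x\in H\}$ for some finite $H\subseteq A$ and hence has intersection $\bigcap_{x\in H}x^{-1}E\in\mathcal{F}$ by hypothesis, so there is $q\in\beta(\mathcal{F})$ with $x^{-1}E\in q$ for every $x\in A$. Then $\{x\in S:x^{-1}E\in q\}\supseteq A$, and since $A\in\mathcal{F}$ and $\mathcal{F}$ is upward hereditary, $\{x\in S:x^{-1}E\in q\}\in\mathcal{F}$; choose $p\in\beta(\mathcal{F})$ containing this set. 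By the formula $E\in p\cdot q$, and since $(1)$ says $\beta(\mathcal{F})$ is a subsemigroup, $p\cdot q\in\beta(\mathcal{F})$, whence $E\in\mathcal{F}$.

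The one step needing genuine care is the selection lemma, and inside it the passage from ``all finite intersections of members of $\mathcal{D}$ lie in $\mathcal{F}$'' (a statement about the family) to ``the corresponding closed sets have the finite intersection property inside the compact space $\beta(\mathcal{F})$'' (a statement a compactness argument can act on); this hinges on $\overline{E_1}\cap\cdots\cap\overline{E_n}=\overline{E_1\cap\cdots\cap E_n}$ and on $\beta(\mathcal{F})$ being closed, hence compact. Everything else is routine unwinding of the definition of the extended operation. It is worth noting the harmless sanity checks that keep the argument honest: $\emptyset\notin\mathcal{F}$, so every set produced from the hypothesis of $(2)$, and in particular each $x^{-1}E$ with $x\in A$, is nonempty and the ultrafilters constructed are legitimate; and $\beta(\mathcal{F})\neq\emptyset$ since $\mathcal{F}\neq\emptyset$.
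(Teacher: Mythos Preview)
The paper does not supply its own proof of this theorem; it is quoted from \cite[Theorem 5.1.10]{C} (Christopherson's thesis) as background. So there is no in-paper argument to compare against, and the relevant question is simply whether your proof is sound.

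It is. Your interpretation of the typo in condition $(2)$ (reading $\bigcap_{x\in H}x^{-1}E$) is the only sensible one. The selection lemma is exactly the right bridge between the combinatorial hypothesis and the ultrafilter picture: the identity $\overline{E_1}\cap\cdots\cap\overline{E_n}=\overline{E_1\cap\cdots\cap E_n}$ in $\beta S$ together with closedness of $\beta(\mathcal{F})$ converts ``finite intersections lie in $\mathcal{F}$'' into the finite intersection property for closed subsets of a compact space. Both implications then go through cleanly: in $(2)\Rightarrow(1)$ the set $A=\{x:x^{-1}E\in q\}$ is the natural witness, and in $(1)\Rightarrow(2)$ the selection lemma produces $q$, upward heredity lifts $A$ to $\{x:x^{-1}E\in q\}$, and any $p\in\beta(\mathcal{F})$ containing the latter gives $E\in p\cdot q\subseteq\mathcal{F}$. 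The edge case $H=\emptyset$ is harmless since $S\in\mathcal{F}$ by upward heredity. This is essentially the standard argument one would expect for such a characterization, and nothing is missing.
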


Let us abbreviate
\begin{itemize}
	\item the family of infinite sets as $\mathcal{IF}$,
	\item the family of piecewise syndetic sets as $\mathcal{PS}$,
	\item the family of  positive upper asymptotic density as $\mathcal{PUAD}$
	\item the family of  positive upper asymptotic density as $\mathcal{PUBD}$
	\item the family of sets containing arithmetic progression of arbitrary length as $\mathcal{AP}$,
	\item the family of sets with the property $\sum_{n\in A}\frac{1}{n}=\infty$ as $\mathcal{HSD}$,
	\item and the family of $J$-sets as $\mathcal{J}$.
\end{itemize}

From the definition of essential $\mathcal{F}$-set together with the abbreviations, we get,
\begin{itemize}
	\item $IP$ set is an essential $\mathcal{IF}$-set,
	\item quasi central set is an essential $\mathcal{PS}$-set,
	\item $D$-set is an essential $\mathcal{PUBD}$-set,
	\item and $C$-set is an essential $\mathcal{J}$-set.
\end{itemize}

We state the following theorem without proof follows from \cite[Theorem 3.11]{HS}.
\begin{theorem}
	Let $S$ be a discrete set. For every family $\mathcal{F\subseteq P}\left(S\right)$
	with the Ramsay property, $\beta\left(\mathcal{F}\right)\subseteq\beta S$,
	$A$ is a $\mathcal{F}$-set if and only if $\overline{A}\cap\beta\left(\mathcal{F}\right)\neq\emptyset$.
\end{theorem}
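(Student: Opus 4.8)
The plan is to read off the equivalence from two facts already available: the standard ultrafilter description of closures in $\beta S$, namely that $\overline{A} = \{p \in \beta S : A \in p\}$ (the content of \cite[Theorem 3.11]{HS}, under the identification of $S$ with the principal ultrafilters, with $\{\overline{A} : A \subseteq S\}$ the clopen basis of $\beta S$), together with the identity $\mathcal{F} = \cup\,\beta(\mathcal{F})$ established in the first theorem of this section for every Ramsey family $\mathcal{F}$. Once both are in hand, the argument is a single unwinding of the membership relations, so I do not expect a genuine obstacle; the only care needed is to keep straight the two readings of ``$p \in \beta(\mathcal{F})$'', as a point of the subspace $\beta(\mathcal{F})$ and as the assertion $p \subseteq \mathcal{F}$.

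For the direction ($\Leftarrow$), I would take $p \in \overline{A} \cap \beta(\mathcal{F})$. From $p \in \overline{A}$ and the closure description, $A \in p$; from $p \in \beta(\mathcal{F})$, by the definition of $\beta(\mathcal{F})$, $p \subseteq \mathcal{F}$, that is, every member of the ultrafilter $p$ lies in $\mathcal{F}$. Hence $A \in p \subseteq \mathcal{F}$, so $A$ is an $\mathcal{F}$-set. This half uses nothing beyond the definitions.

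For the direction ($\Rightarrow$), assume $A$ is an $\mathcal{F}$-set, i.e. $A \in \mathcal{F}$. By the identity $\mathcal{F} = \cup\,\beta(\mathcal{F})$ there is some $p \in \beta(\mathcal{F})$ with $A \in p$; then $p \in \overline{A}$ by the closure description, so $p \in \overline{A} \cap \beta(\mathcal{F})$ and the intersection is nonempty. The only nontrivial input here is precisely the identity $\mathcal{F} = \cup\,\beta(\mathcal{F})$, which is where the Ramsey property of $\mathcal{F}$ does its work, through the construction of an ultrafilter passing through a prescribed member of $\mathcal{F}$ all of whose members stay in $\mathcal{F}$.

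Thus the ``hard part'' is not located in this theorem at all: it has been absorbed into the earlier theorem, and what remains is the bookkeeping of the definitions of $\mathcal{F}$-set, of $\beta(\mathcal{F})$, and of the clopen sets $\overline{A}$. If one instead wanted a self-contained proof of the forward implication, I would verify directly that $\{A\} \cup \{S \setminus B : B \in \mathcal{P}(S)\setminus\mathcal{F}\}$ has the finite intersection property --- a cover $A \subseteq B_1 \cup \cdots \cup B_n$ by sets outside $\mathcal{F}$ would, by iterating the two-piece dichotomy of the Ramsey property together with upward heredity, force some $A \cap B_i \in \mathcal{F}$, which is impossible since $A \cap B_i \subseteq B_i \notin \mathcal{F}$ --- and then extend this collection to an ultrafilter $p$ by the ultrafilter lemma, noting that $p$ then contains no member of $\mathcal{P}(S)\setminus\mathcal{F}$, hence $p \subseteq \mathcal{F}$; but invoking $\mathcal{F} = \cup\,\beta(\mathcal{F})$ makes this detour unnecessary.
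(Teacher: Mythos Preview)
Your proof is correct. Note, however, that the paper does not actually give a proof of this statement: it is introduced with ``We state the following theorem without proof follows from \cite[Theorem 3.11]{HS}'', so there is nothing to compare against beyond that citation. Your argument supplies precisely the details the paper omits, and does so along the lines the paper implicitly suggests: the clopen description $\overline{A}=\{p\in\beta S:A\in p\}$ is the content of the cited result in \cite{HS}, and combining it with the identity $\mathcal{F}=\cup\,\beta(\mathcal{F})$ from the earlier theorem immediately yields both implications. Your optional self-contained argument for the forward direction via the finite intersection property is also correct and is essentially a reproof of $\mathcal{F}=\cup\,\beta(\mathcal{F})$, so, as you observe, it is redundant once that earlier theorem is invoked.
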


From the above theorem, we get the following corollary:
\begin{corollary}
	Let $A\subseteq\mathbb{N}$.
	\begin{itemize}
		\item[(a)] $A$ is a $AP$-set if and only if $\overline{A}\cap\beta\left(\mathcal{\mathcal{AP}}\right)\neq\emptyset$.
		\item[(b)] $A$ is $IF$-set if and only if $\overline{A}\cap\beta\left(\mathcal{\mathcal{IF}}\right)\neq\emptyset$.
		\item[(c)] $A$ is a $PS$-set if and only if $\overline{A}\cap\beta\left(\mathcal{\mathcal{PS}}\right)\neq\emptyset$.
		\item[(d)] $A$ is a $HSD$-set if and only if $\overline{A}\cap\beta\left(\mathcal{\mathcal{HSD}}\right)\neq\emptyset$.
		\item[(e)] $A$ is $J$-set if and only if $\overline{A}\cap\beta\left(\mathcal{\mathcal{J}}\right)\neq\emptyset$.
		\item[(f)]$A$ is set with $PUAD$-set if and only if $\overline{A}\cap\beta\left(\mathcal{\mathcal{PUAD}}\right)\neq\emptyset$.
		\item[(f)]$A$ is set with $PUBD$-set if and only if $\overline{A}\cap\beta\left(\mathcal{\mathcal{PUBD}}\right)\neq\emptyset$.
		\item[(g)] $A$ is a $AP^{\star}$-set if and only if $\beta\left(\mathcal{\mathcal{AP}}\right)\subseteq\overline{A}\neq\emptyset$.
		\item[(h)]$A$ is $IF^{\star}$-set if and only if $\beta\left(\mathcal{\mathcal{IF}}\right)\subseteq\overline{A}\neq\emptyset$.
		\item[(i)] $A$ is a $PS^{\star}$-set if and only if $\beta\left(\mathcal{\mathcal{PS}}\right)\subseteq\overline{A}\neq\emptyset$.
		\item[(j)] $A$ is a $HSD^{\star}$-set if and only if $\beta\left(\mathcal{\mathcal{HSD}}\right)\subseteq\overline{A}\neq\emptyset$.
		\item[(k)] $A$ is $J^{\star}$-set if and only if $\beta\left(\mathcal{\mathcal{J}}\right)\subseteq\overline{A}\neq\emptyset$.
		\item[(l)] $A$ is set with $PUAD^{\star}$-set if and only if $\beta\left(\mathcal{\mathcal{PUAD}}\right)\subseteq\overline{A}\neq\emptyset$.
		\item[(m)] $A$ is set with $PUBD^{\star}$-set if and only if $\beta\left(\mathcal{\mathcal{PUBD}}\right)\subseteq\overline{A}\neq\emptyset$.
	\end{itemize}
	
\end{corollary}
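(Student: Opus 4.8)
The plan is to read the statement off from the theorem proved immediately above --- the characterisation that, for \emph{any} family $\mathcal{F}$ with the Ramsey property, $A$ is an $\mathcal{F}$-set if and only if $\overline{A}\cap\beta(\mathcal{F})\neq\emptyset$ --- combined with the elementary duality between a family and its dual. The one prerequisite is that each of the seven families $\mathcal{AP}$, $\mathcal{IF}$, $\mathcal{PS}$, $\mathcal{HSD}$, $\mathcal{J}$, $\mathcal{PUAD}$, $\mathcal{PUBD}$ has the Ramsey property; this was recorded in the list of examples earlier in the section, the only cases meriting a remark being $\mathcal{AP}$ (van der Waerden's theorem), $\mathcal{PS}$ (the usual two-colouring argument for piecewise syndetic sets), $\mathcal{PUAD}$ and $\mathcal{PUBD}$ (subadditivity of $\overline{d}$ and $d^{\star}$), $\mathcal{HSD}$ (if $A=A_{1}\cup A_{2}$ then $\sum_{n\in A_{1}}1/n+\sum_{n\in A_{2}}1/n\ge\sum_{n\in A}1/n=\infty$, so one summand is infinite), and $\mathcal{J}$ (proved in \cite{DHS}); $\mathcal{IF}$ is immediate. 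Granting this, items (a)--(f) are just the preceding theorem instantiated at each of these seven families.

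For the starred items I would first isolate and prove the general fact: if $\mathcal{F}$ is a family with the Ramsey property and $A\subseteq\mathbb{N}$, then $A\in\mathcal{F}^{\star}$ if and only if $\beta(\mathcal{F})\subseteq\overline{A}$. For the forward implication, assume $\beta(\mathcal{F})\subseteq\overline{A}$ and let $B\in\mathcal{F}$; by the preceding theorem there is $p\in\overline{B}\cap\beta(\mathcal{F})$, and then $p\in\overline{A}$, so $p\in\overline{A}\cap\overline{B}=\overline{A\cap B}$, hence $A\cap B\neq\emptyset$; since $B$ ranges over $\mathcal{F}$ this means $A\in\mathcal{F}^{\star}$. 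For the reverse implication, assume $A\in\mathcal{F}^{\star}$ and let $p\in\beta(\mathcal{F})$, so $p\subseteq\mathcal{F}$; if $A\notin p$ then $\mathbb{N}\setminus A\in p\subseteq\mathcal{F}$, and then $A\cap(\mathbb{N}\setminus A)=\emptyset$ contradicts $A\in\mathcal{F}^{\star}$, so $A\in p$, i.e.\ $p\in\overline{A}$; as $p$ was arbitrary, $\beta(\mathcal{F})\subseteq\overline{A}$. Specialising this equivalence to the same seven families yields the starred items (g)--(m).

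I do not anticipate a real obstacle: the argument is bookkeeping layered on top of the preceding theorem. The two points to keep honest are the classical verifications of the Ramsey property listed above, and the identity $\overline{A\cap B}=\overline{A}\cap\overline{B}$ in $\beta\mathbb{N}$ --- immediate from the fact that for an ultrafilter $p$ one has $A\cap B\in p$ iff $A\in p$ and $B\in p$ (see \cite{HS}) --- which is exactly what makes the intersection step in the starred case work.
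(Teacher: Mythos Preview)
Your argument is correct and is exactly the route the paper intends: the paper offers no proof at all, merely declaring the corollary to follow ``from the above theorem,'' and your write-up is a faithful unpacking of that claim --- instantiating the preceding theorem for items (a)--(f) and spelling out the standard duality $A\in\mathcal{F}^{\star}\Leftrightarrow\beta(\mathcal{F})\subseteq\overline{A}$ for the starred items. One cosmetic slip: you have interchanged the labels ``forward'' and ``reverse'' in the starred argument (you begin the ``forward'' direction by assuming $\beta(\mathcal{F})\subseteq\overline{A}$), but both implications are proved correctly, so nothing is lost.
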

In Section \ref{Elementary results}, we will present elementary proofs regarding preservation of $\mathcal{HSD}$ family and $\mathcal{PUAD}$ family under nonhomogeneous spectra.
In Section \ref{Algebraic results}, we will proof preservation of nonhomogeneous spectra of special types of essential $\mathcal{F}$-sets. 

\section{Some elementary results}\label{Elementary results}

 We get the following result from \cite[Theorem 2.4, Theorem 2.5]{HJ}.
 \begin{theorem}\label{spectra of PS and AP}
 	 Let $\alpha>0$, and let $0<\gamma<1$ and $A$ be a subset of $\mathbb{N}$.
 \begin{itemize}
 	\item[(1)] If $A$ is piecwise syndetic subset of $\mathbb{N}$, then $g_{\alpha,\gamma}\left[A\right]$ is picewise syndetic i.e., $g_{\alpha,\gamma}\left[\mathcal{PS}\right]\subseteq\mathcal{PS}$.
 	
 	\item[(2)]  If A is $AP$-set of $\mathbb{N}$, then $g_{\alpha,\gamma}\left[A\right]$ is $AP$-set i.e., $g_{\alpha,\gamma}\left[\mathcal{AP}\right]\subseteq\mathcal{AP}$.
 \end{itemize}	
 	
 \end{theorem}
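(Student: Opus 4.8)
Since this statement is precisely \cite[Theorems 2.4 and 2.5]{HJ}, one route is simply to quote it; I will instead outline a self-contained argument. Throughout write $g=g_{\alpha,\gamma}$ and record the crude two-sided estimate that $g$ is nondecreasing and $\alpha(n-m)-1<g(n)-g(m)<\alpha(n-m)+1$ whenever $m<n$ (immediate from $x-1<\lfloor x\rfloor\le x$). Part (1) will use only this estimate; part (2) will need a little number theory.

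For part (1) the plan is to invoke the standard syndetic-block description of piecewise syndeticity: there is $d\in\mathbb{N}$ such that for every $L$ there is a block $[a,b]$ with $b-a\ge L$ and $a,b\in A$ inside which $A$ is $d$-syndetic. Listing $A\cap[a,b]$ as $a=a_0<a_1<\cdots<a_t=b$, the estimate gives $g(a_{i+1})-g(a_i)<\alpha d+1$ for every $i$, while $g(b)-g(a)>\alpha(b-a)-1\ge\alpha L-1$. Hence $g[A]$ meets every subinterval of $[g(a),g(b)]$ of length $\lceil\alpha d+1\rceil$, and the length of $[g(a),g(b)]$ grows without bound with $L$ while the gap bound $\lceil\alpha d+1\rceil$ stays fixed; so $g[A]$ is piecewise syndetic. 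I expect this part to be routine.

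For part (2) I would fix a target length $k$ and, using that $A$ is an $AP$-set, choose inside $A$ an arithmetic progression $P=\{a+jr:0\le j\le L\}$ with $L$ as large as needed; the point I would want to make precise is that the required size of $L$ depends only on $k$ and $\alpha$ (not on $P$), which removes any circularity. Writing $\beta=\alpha a+\gamma$, the engine is the identity $g(a+jr)=j\lfloor\alpha r\rfloor+\lfloor\beta+j\{\alpha r\}\rfloor$. If $\alpha r<1$, the increments of $g$ along $P$ are $0$ or $1$ and $g$ is nondecreasing, so $g[P]$ is a full block of consecutive integers of length $\ge\alpha L-1$, which contains a $k$-term progression once $L\ge(k+1)/\alpha$. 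If $\alpha r\ge1$, so $\lfloor\alpha r\rfloor\ge1$, I would dilate: by Dirichlet's theorem pick $1\le s\le k$ with $\{s\alpha r\}$ within $1/(k+1)$ of $0$ or of $1$, and pass to $\{a+(sr)j':0\le j'\le\lfloor L/s\rfloor\}$, along which $g(a+(sr)j')=j'\lfloor s\alpha r\rfloor+\lfloor\beta+j'\eta\rfloor$ with $\lfloor s\alpha r\rfloor\ge1$ and $\eta=\{s\alpha r\}$. When $\eta$ is near $0$, the floor-step function $j'\mapsto\lfloor\beta+j'\eta\rfloor$ is constant on a block of at least $k$ consecutive $j'$ in $[0,\lfloor L/s\rfloor]$ (a short case split on $\eta$ small versus $\eta$ not-so-small relative to $\lfloor L/s\rfloor$), and when $\eta$ is near $1$ the same holds for $j'\mapsto\lfloor\beta+j'\eta\rfloor-j'$; on such a block $g$ is affine in $j'$ with positive slope ($\lfloor s\alpha r\rfloor$, resp.\ $\lfloor s\alpha r\rfloor+1$), so its image there is a genuine $k$-term arithmetic progression inside $g[A]$. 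Tallying the constraints yields a bound on $L$ that is polynomial in $k$ together with a term $(k+1)/\alpha$, as wanted.

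The main obstacle is part (2): the image of an arbitrary progression in $A$ is typically not a progression, so the fractional parts $\{j\alpha r\}$ must be tamed by dilating to a sub-progression and then locating a long constant run of the relevant floor-step function, and this has to be done with a length bound uniform over all progressions in $A$. The care is concentrated in the degenerate regimes---$\alpha r<1$, $\{\alpha r\}$ lying in the ``bad middle'' of $[0,1)$, and $\{\alpha r\}$ pathologically small---and this is exactly the bookkeeping carried out in \cite[\S2]{HJ}, which is why I would be content, in the paper, to cite that reference rather than reproduce the details.
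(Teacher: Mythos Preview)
The paper does not prove this theorem at all: it is stated with the preface ``We get the following result from \cite[Theorem 2.4, Theorem 2.5]{HJ}'' and no argument is given. Your proposal already identifies this citation as a sufficient route, so in that sense you match the paper exactly.

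Your additional self-contained sketch goes beyond what the paper does. Part~(1) is clean and correct: the crude two-sided estimate on $g(n)-g(m)$ transports the bounded-gap-on-long-blocks description of piecewise syndeticity directly. Part~(2) is also on the right track---the decomposition $g(a+jr)=j\lfloor\alpha r\rfloor+\lfloor\beta+j\{\alpha r\}\rfloor$, the degenerate case $\alpha r<1$ giving a full interval, and the Dirichlet dilation to tame $\{s\alpha r\}$ are exactly the ingredients used in \cite[\S2]{HJ}. The one place to be slightly careful is your claim that the required length $L$ depends only on $k$ and $\alpha$: when $\eta=\{s\alpha r\}$ is extremely small but nonzero, the first constant block of $j'\mapsto\lfloor\beta+j'\eta\rfloor$ could in principle start late depending on $\beta$, but since consecutive jumps are at least $\lfloor 1/\eta\rfloor\ge k$ apart, any window of length $2(k+1)$ in $j'$ already contains a constant run of length $k$, so the bound is indeed uniform. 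With that remark your outline is sound, and since the paper is content to cite \cite{HJ}, your version is strictly more informative.
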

 
 The Erdős–Turán conjecture states that if the sum of the reciprocals (harmonic sum)   of the members of a set $A$ of $\mathbb{N}$ diverges, then $A$ contains arbitrarily long arithmetic progressions. This conjecture establishes the importance of $\mathcal{HSD}$ family in additive combinatorics. The following theorem states the preservation of $\mathcal{HSD}$ family under nonhomogeneous spectra.

\begin{theorem}\label{Spectra of HSD}
	Let $\alpha>0$, and let $0<\gamma<1$. Then $g_{\alpha,\gamma}\left[\mathcal{\mathcal{HSD}}\right]\subseteq\mathcal{\mathcal{\mathcal{HSD}}}$.
\end{theorem}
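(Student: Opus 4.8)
The plan is to give a purely elementary counting argument, exploiting that $g = g_{\alpha,\gamma}$ is ``almost linear'': it does not shrink the elements of $A$ by more than a bounded factor, and it does not collapse more than boundedly many of them onto a single value. The two quantitative facts I would isolate are: (i) a lower bound for $1/g(n)$ in terms of $1/n$, and (ii) a uniform upper bound on the size of the fibres of $g$.

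First I would record that $\alpha n+\gamma-1<g(n)\le \alpha n+\gamma$ for all $n$, and that for all but finitely many $n$ one has $g(n)\ge 1$ and $\alpha n+\gamma\le(\alpha+1)n$ (the latter since $\gamma<1\le n$). Since deleting finitely many elements of $A$ affects neither the hypothesis nor the conclusion, we may assume this holds for every $n\in A$, so that $\frac{1}{g(n)}\ge\frac{1}{(\alpha+1)n}$ on $A$ and hence $\sum_{n\in A}\frac{1}{g(n)}\ge\frac{1}{\alpha+1}\sum_{n\in A}\frac1n=\infty$. Next, for the fibres: $g(n)=m$ forces $\frac{m-\gamma}{\alpha}\le n<\frac{m+1-\gamma}{\alpha}$, a half-open interval of length $1/\alpha$, which contains at most $c:=\lfloor 1/\alpha\rfloor+1$ integers, independently of $m$. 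Therefore
\[
  \sum_{n\in A}\frac{1}{g(n)}=\sum_{m\in g[A]}\frac{\bigl|\{n\in A: g(n)=m\}\bigr|}{m}\le c\sum_{m\in g[A]}\frac1m,
\]
and combining the two estimates gives $\sum_{m\in g[A]}\frac1m\ge\frac{1}{c(\alpha+1)}\sum_{n\in A}\frac1n=\infty$, i.e. $g[A]\in\mathcal{HSD}$.

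There is no genuine obstacle here — this is the elementary counterpart promised by the section heading — the only care needed being the harmless removal of the finitely many $n$ for which $g(n)$ might fail to be a positive integer, and stating the fibre bound uniformly in $m$. As a remark, when $\alpha\ge1$ step (ii) is unnecessary: then $\lfloor x+\alpha\rfloor\ge\lfloor x\rfloor+1$, so $g$ is strictly increasing and hence injective, and one may take $c=1$.
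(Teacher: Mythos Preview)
Your proof is correct. Both arguments are elementary and rest on the same two ingredients --- the bound $g(n)\le(\alpha+1)n$ (the paper uses $g(n)\le\alpha n+1$) and a way to absorb the possible non-injectivity of $g$ when $\alpha<1$ --- but they organize the second step differently. The paper chooses $r\in\mathbb{N}$ with $r\alpha>1$, enumerates $A=\{n_1<n_2<\cdots\}$, and partitions $A=\bigcup_{i=1}^{r}A_i$ with $A_i=\{n_{i+rj}\}_{j\ge0}$; consecutive elements of each $A_i$ differ by at least $r$, so $g$ is strictly increasing on $A_i$, and by the Ramsey property of $\mathcal{HSD}$ some $A_i$ is still in $\mathcal{HSD}$, whence the comparison $\sum_{n\in A_i}\frac{1}{g(n)}\ge\sum_{n\in A_i}\frac{1}{n\alpha+1}=\infty$ finishes. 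You instead keep all of $A$ and bound the fibre sizes $|g^{-1}(m)|\le c=\lfloor 1/\alpha\rfloor+1$ uniformly, getting the explicit inequality $\sum_{m\in g[A]}1/m\ge \frac{1}{c(\alpha+1)}\sum_{n\in A}1/n$. Your route is a bit more direct and avoids invoking partition regularity; the paper's route has the side benefit of producing a subset on which $g$ is injective, a device it reuses verbatim in the proof of the next theorem on $\mathcal{PUAD}$.
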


\begin{proof}
	Let $A$ be a subset of $\mathbb{N}$ such that $A\in\mathcal{\mathcal{HSD}}$.
	For $\alpha>0$ , find out $r\in\mathbb{N}$ , such that $r\alpha>1$.
	Let $A=\left\{ n_{r}\right\} _{r=1}^{\infty}$ and partition $A=\cup_{i=1}^{r}A_{i}$
	where $A_{i}=\left\{ n_{i+rj}\right\} _{j=0}^{\infty}$. Then there
	exists $i\in\left\{ 1,2,\ldots,r\right\} $ such that $$f:A_{i}\rightarrow g_{\alpha,\gamma}\left[A_{i}\right]$$
	defined by $f\left(n\right)=\left[n\alpha+\gamma\right]$ is injective
	and increasing function and $A_{i}\in\mathcal{\mathcal{\mathcal{HSD}}}$.
	To prove the theorem,  it is sufficient that $g_{\alpha,\gamma}\left[A_{i}\right]$
	is $HSD$-set. Now $\left\{ g_{\alpha,\gamma}\left[n\right]\right\} _{n\in A_{i}}$
	is an injective and stricly increasing sequence and $\left[n\alpha+\gamma\right]\leq n\alpha+1$
	implies $$\sum_{n\in A_{i}}\frac{1}{n\alpha+1}\leq\sum_{n\in A_{i}}\frac{1}{\left[n\alpha+\gamma\right]}\,.$$
	Now $\sum_{n\in A_{i}}\frac{1}{n\alpha+1}$ diverges,  when $\sum_{n\in A_{i}}\frac{1}{n}$
	diverges. 
\end{proof}
As we mentioned earlier, the $\mathcal{HSD}$ family is a very important and interesting  family for additive number theory. If we denote the  set of prime numbers of $\mathbb{N}$ by $\mathbb{P}$, then a well known fact is  $$\sum_{n\in \mathbb{P}}\frac{1}{n}=\infty\,.$$ 
The famous Green-Tao theorem \cite{GT} states that the set of prime numbers contain arithmetic progression of arbitrary length.  By this discussion we have a simple but very interesting result connected with nonhomogeneous spectra of   the set of prime numbers. 
\begin{theorem}
	Let $\alpha>0$, and let $0<\gamma<1$. If we take $\mathbb{P}$ as the set of prime numbers then $g_{\alpha,\gamma}\left[\mathbb{P}\right]\in\mathcal{AP}\cap\mathcal{HSD}$.
\end{theorem}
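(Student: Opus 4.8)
The plan is to assemble this directly from three facts already available: the classical Euler result that $\sum_{p\in\mathbb{P}}\frac1p$ diverges, the Green--Tao theorem, and the two preservation statements proved above (Theorem~\ref{Spectra of HSD} and Theorem~\ref{spectra of PS and AP}(2)). So there is really no new combinatorial work to do; the statement is a corollary, and the proof will just record the composition of these inputs.

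First I would note that $\mathbb{P}\in\mathcal{HSD}$ is precisely the statement $\sum_{n\in\mathbb{P}}\frac1n=\infty$, which is classical. Then, by Theorem~\ref{Spectra of HSD}, since $g_{\alpha,\gamma}\left[\mathcal{HSD}\right]\subseteq\mathcal{HSD}$, we immediately get $g_{\alpha,\gamma}\left[\mathbb{P}\right]\in\mathcal{HSD}$. Next, the Green--Tao theorem \cite{GT} says exactly that $\mathbb{P}$ contains arithmetic progressions of every finite length, i.e. $\mathbb{P}\in\mathcal{AP}$; applying Theorem~\ref{spectra of PS and AP}(2), which gives $g_{\alpha,\gamma}\left[\mathcal{AP}\right]\subseteq\mathcal{AP}$, yields $g_{\alpha,\gamma}\left[\mathbb{P}\right]\in\mathcal{AP}$. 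Combining the two memberships gives $g_{\alpha,\gamma}\left[\mathbb{P}\right]\in\mathcal{AP}\cap\mathcal{HSD}$, as claimed.

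There is no genuine obstacle here: the only substantive ingredients are the cited deep theorem of Green and Tao and the already-established invariance of the families $\mathcal{AP}$ and $\mathcal{HSD}$ under nonhomogeneous spectra. One could, if desired, replace the appeal to Theorem~\ref{spectra of PS and AP}(2) by a short direct argument that $g_{\alpha,\gamma}$ carries a long arithmetic progression into a long arithmetic progression up to a bounded perturbation, but since that result is already in hand I would not bother; the cleanest write-up simply chains the four cited facts together in the order above.
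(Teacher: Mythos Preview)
Your proposal is correct and matches the paper's own treatment exactly: the paper does not give a formal proof but simply derives the result from the preceding discussion, namely Euler's divergence of $\sum_{p\in\mathbb{P}}1/p$, the Green--Tao theorem, and the preservation results Theorem~\ref{spectra of PS and AP}(2) and Theorem~\ref{Spectra of HSD}. There is nothing to add.
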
 
Another elementary result of this section is given below.
\begin{theorem}\label{Spectra of PUAD}
	Let $\alpha>0$, and let $0<\gamma<1$.Then $g_{\alpha,\gamma}\left[\mathcal{PUAD}\right]\subseteq\mathcal{PUAD}$.
\end{theorem}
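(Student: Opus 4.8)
The plan is to run a direct counting argument, comparing the density of $A$ in an initial segment $\{1,\dots,N\}$ with the density of $g_{\alpha,\gamma}[A]$ in the initial segment $\{1,\dots,g_{\alpha,\gamma}(N)\}$. Write $g:=g_{\alpha,\gamma}$. Two elementary facts about $g$ drive everything. First, $g$ is nondecreasing, since $\alpha(n+1)+\gamma\ge\alpha n+\gamma$ and the floor function is monotone; consequently $g[\{1,\dots,N\}]\subseteq\{0,1,\dots,g(N)\}$, so $g[A\cap\{1,\dots,N\}]$ is contained in $g[A]\cap\{1,\dots,g(N)\}$ together with possibly the single value $0$ (which occurs only when $\alpha+\gamma<1$ and is harmless for asymptotic density). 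Second, every fibre of $g$ is small: if $g(n)=g(n')=k$ then $k\le\alpha n+\gamma<k+1$ and $k\le\alpha n'+\gamma<k+1$ force $|n-n'|<1/\alpha$, so each fibre lies in an interval of length $<1/\alpha$ and hence contains at most $C:=\lfloor 1/\alpha\rfloor+1$ integers. In particular $|g[B]|\ge|B|/C$ for every finite $B\subseteq\mathbb{N}$.

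Now fix $A$ with $d^{*}:=\overline{d}(A)>0$ and choose $N_{j}\to\infty$ with $|A\cap\{1,\dots,N_{j}\}|/N_{j}\to d^{*}$. Put $m_{j}:=g(N_{j})$; since $\alpha N_{j}+\gamma-1<m_{j}\le\alpha N_{j}+\gamma$ we have $m_{j}\to\infty$ and $N_{j}/m_{j}\to 1/\alpha$. Combining the two facts above,
\[
\frac{\bigl|g[A]\cap\{1,\dots,m_{j}\}\bigr|}{m_{j}}\;\ge\;\frac{\bigl|g[A\cap\{1,\dots,N_{j}\}]\bigr|-1}{m_{j}}\;\ge\;\frac{1}{C}\cdot\frac{|A\cap\{1,\dots,N_{j}\}|}{N_{j}}\cdot\frac{N_{j}}{m_{j}}-\frac{1}{m_{j}}\,.
\]
Letting $j\to\infty$ and using that $\overline{d}(g[A])=\limsup_{m}|g[A]\cap\{1,\dots,m\}|/m$ dominates the limit along the subsequence $(m_{j})$, the right-hand side tends to $d^{*}/(C\alpha)>0$, so $\overline{d}(g[A])\ge d^{*}/(C\alpha)>0$; that is, $g_{\alpha,\gamma}[A]\in\mathcal{PUAD}$.

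There is essentially no serious obstacle here; the one place that needs a little care is the non-injectivity of $g$ when $\alpha<1$, which is exactly what the fibre bound $C=\lfloor 1/\alpha\rfloor+1$ is designed to absorb (for $\alpha\ge 1$ one has $C=1$, i.e.\ $g$ is strictly increasing, and the estimate sharpens to $\overline{d}(g[A])\ge d^{*}/\alpha$). The remaining points --- that $g$ is nondecreasing, that $m_{j}/N_{j}\to\alpha$, and that altering $A$ on a finite initial segment (which covers the degenerate value $g(1)=0$ when $\alpha+\gamma<1$) affects neither $\overline{d}(A)$ nor the density of $g[A]$ --- are routine. One could alternatively imitate the reciprocal-sum comparison used in the proof of Theorem~\ref{Spectra of HSD}, but the direct density count above is shorter and treats both ranges of $\alpha$ at once.
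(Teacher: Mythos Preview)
Your argument is correct. The key step---bounding each fibre $g^{-1}(\{k\})$ by $C=\lfloor 1/\alpha\rfloor+1$ integers and then comparing $|A\cap\{1,\dots,N_j\}|$ with $|g[A]\cap\{1,\dots,m_j\}|$ along the chosen subsequence---goes through, and the handling of the possible value $g(1)=0$ and of the ratio $N_j/m_j\to 1/\alpha$ is fine. (One cosmetic point: your formula $C=\lfloor 1/\alpha\rfloor+1$ gives $C=2$ at $\alpha=1$, not $C=1$ as in your parenthetical, but this does not affect the conclusion.)

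The paper proceeds differently. Rather than bounding fibre sizes, it invokes the partition trick from Theorem~\ref{Spectra of HSD}: write $A=\{n_1<n_2<\cdots\}$, split $A$ into $r$ residue classes $A_i=\{n_{i+rj}\}_{j\ge 0}$ where $r\alpha>1$, so that $g$ is strictly increasing on each $A_i$, and then use the Ramsey property of $\mathcal{PUAD}$ to pick one $A_i$ of positive upper density. It then splits into the cases $\alpha\le 1$ (where $g(n)\le n$ gives a direct inclusion) and $\alpha>1$ (where one rescales by $\lfloor\alpha n_i+\gamma\rfloor$). Your approach avoids both the Ramsey step and the case split, treating all $\alpha>0$ uniformly at the cost of a slightly weaker constant; the paper's approach, in exchange, dovetails with the proof of Theorem~\ref{Spectra of HSD} and so economises on new ideas.
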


\begin{proof}
	Let $A$ be a subset of $\mathbb{N}$ and $A\in\mathcal{PUAD}$ .
	As in the Theorem \ref{Spectra of HSD},  we can find $B\subseteq A$ such that $\left\{ g_{\alpha,\gamma}\left[n\right]\right\} _{n\in B}$
	is injective and strictly increasing sequence with   $B\in\mathcal{PUAD}$. So for some
	 sequence $\left\{ n_{i}\right\} $, we get $$\frac{|B\cap[1,n_{i}]|}{n_{i}}>\beta>0\,,
	\text{for all}\, i\in\mathbb{N}.$$\\ Now two cases arise:	
	Case-I: If $\alpha\leq1$, $$|B\cap[1,n_{i}]|\leq|g_{\alpha,\gamma}\left[B\right]\cap[1,n_{i}]|\,,
	\text{for all}\, i\in\mathbb{N}.$$ From which we get,
	 $$\frac{|B\cap[1,n_{i}]|}{n_{i}}\leq\frac{|g_{\alpha,\gamma}\left[B\right]\cap[1,n_{i}]|}{n_{i}}\,,
	\text{for all}\, i\in\mathbb{N}.$$
	Case-II:  if $\alpha>1$, we have, for all $i\in\mathbb{N}$ $$\frac{|g_{\alpha,\gamma}\left[B\right]\cap[1,\left[n_{i}\alpha+\gamma\right]]|}{\left[n_{i}\alpha+\gamma\right]}=\frac{|B\cap[1,n_{i}]|}{\left[n_{i}\alpha+\gamma\right]}\geq\frac{1}{r}\frac{|B\cap[1,n_{i}]|}{n_{i}}$$
	when $r\in\mathbb{N}$ and $r>\alpha$. By combining these two cases we get our required result.
\end{proof}

The following can be proved in the same way as the above theorem is proved.
\begin{theorem}\label{spectra of PUBD}
	Let $\alpha>0$, and let $0<\gamma<1$. Then $g_{\alpha,\gamma}\left[\mathcal{PUBD}\right]\subseteq\mathcal{PUBD}$.
\end{theorem}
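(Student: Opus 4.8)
\section*{Proof proposal for Theorem~\ref{spectra of PUBD}}

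The plan is to follow the proof of Theorem~\ref{Spectra of PUAD} almost verbatim, replacing the initial segments by a sequence of growing blocks that witness positive upper Banach density. Start with $A\subseteq\mathbb{N}$ with $d^{\star}(A)>0$. Exactly as in the proof of Theorem~\ref{Spectra of HSD}, choose $r\in\mathbb{N}$ with $r\alpha>1$, enumerate $A=\{n_k\}_{k=1}^{\infty}$ increasingly, and split $A=\bigcup_{i=1}^{r}A_i$ with $A_i=\{n_{i+rj}\}_{j=0}^{\infty}$; consecutive elements of each $A_i$ differ by at least $r$, and since $r\alpha>1$ this forces $g_{\alpha,\gamma}$ to be strictly increasing, hence injective, on each $A_i$. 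Because $\mathcal{PUBD}$ has the Ramsey property, some $A_i$, which we relabel $B$, still satisfies $d^{\star}(B)>0$, and as $\mathcal{PUBD}$ is upward hereditary and $g_{\alpha,\gamma}[B]\subseteq g_{\alpha,\gamma}[A]$, it suffices to prove $g_{\alpha,\gamma}[B]\in\mathcal{PUBD}$.

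Next, unpack $d^{\star}(B)>0$: there are $\beta>0$ and integer blocks $[n_i+1,m_i]$ with $m_i-n_i\to\infty$ and $|B\cap[n_i+1,m_i]|>\beta(m_i-n_i)$. The only geometric fact needed is that $g_{\alpha,\gamma}$ carries $[n_i+1,m_i]$ into the integer interval $J_i:=\{\lfloor\alpha(n_i+1)+\gamma\rfloor,\dots,\lfloor\alpha m_i+\gamma\rfloor\}$, and a one-line floor estimate shows $|J_i|\le\alpha(m_i-n_i)+2$ while $|J_i|\to\infty$. By injectivity of $g_{\alpha,\gamma}$ on $B$, the set $g_{\alpha,\gamma}[B]$ meets $J_i$ in at least $|B\cap[n_i+1,m_i]|$ points, whence
\[
\frac{|g_{\alpha,\gamma}[B]\cap J_i|}{|J_i|}\;\ge\;\frac{\beta(m_i-n_i)}{\alpha(m_i-n_i)+2}\;\longrightarrow\;\frac{\beta}{\alpha}\;>\;0\qquad(i\to\infty),
\]
so $d^{\star}(g_{\alpha,\gamma}[B])\ge\beta/\alpha>0$, finishing the proof.

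I do not expect any real obstacle here: as in the asymptotic-density case, the only thing to watch is the floor-function bookkeeping that bounds $|J_i|$ above by $\alpha(m_i-n_i)+O(1)$, which is precisely what keeps the density from collapsing and only scales it by the factor $1/\alpha$. If one prefers to reproduce the exact two-case format of Theorem~\ref{Spectra of PUAD}, the case $\alpha\le1$ uses $|J_i|\le m_i-n_i+2$ directly and the case $\alpha>1$ uses $|J_i|\le\alpha(m_i-n_i)+2$; the displayed inequality above already covers both uniformly.
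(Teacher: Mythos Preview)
Your proposal is correct and is precisely the argument the paper has in mind: the paper gives no separate proof for Theorem~\ref{spectra of PUBD} but simply says it ``can be proved in the same way'' as Theorem~\ref{Spectra of PUAD}, and you have carried out exactly that transfer, replacing initial segments $[1,n_i]$ by moving blocks $[n_i+1,m_i]$. Your single estimate $|J_i|\le\alpha(m_i-n_i)+2$ cleanly unifies the two cases $\alpha\le1$ and $\alpha>1$ that the paper treats separately in the $\mathcal{PUAD}$ proof, but this is a cosmetic rather than a substantive difference.
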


An important result of \cite{HJ} is the following \cite[Theorem 4.6]{HJ}:
\begin{theorem}\label{Spectrea of J-set}
		Let $\alpha>0$, and let $0<\gamma<1$.
	 If A is $J$-set in $\mathbb{N}$, then $g_{\alpha,\gamma}\left[A\right]$ is $J$-set in $\mathbb{N}$ i.e., $g_{\alpha,\gamma}\left[\mathcal{J}\right]\subseteq\mathcal{J}$.
\end{theorem}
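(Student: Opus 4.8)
The plan is to adapt to $J$-sets the ``pull back, apply, push forward'' strategy that Bergelson, Hindman and Kra use in \cite{BHK} for $IP$-sets, the role of the single $FS$-generating sequence of an $IP$-set now being played by the finitely many sequences occurring in the definition of a $J$-set.

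First, two reductions. Write $g=g_{\alpha,\gamma}$ and let $A$ be a $J$-set in $\mathbb{N}$. Since $\mathcal{J}$ has the Ramsey property, the partition argument already used in the proof of Theorem~\ref{Spectra of HSD} lets us replace $A$ by an infinite subset, still a $J$-set, on which $g$ is injective and strictly increasing; and, applying the Ramsey property of $\mathcal{J}$ once more to the finite partition of $\mathbb{N}$ into the sets $\{m:\{\alpha m+\gamma\}\in[\frac{j-1}{N},\frac{j}{N})\}$, we may in addition assume that $\{\alpha m+\gamma\}$ lies in a single prescribed short interval for every $m\in A$. We must then show: given $F=\{f_1,\dots,f_k\}\in\mathcal{P}_{f}({}^{\mathbb{N}}\mathbb{N})$ there are $b\in\mathbb{N}$ and $H\in\mathcal{P}_{f}(\mathbb{N})$ with $b+\sum_{n\in H}f_i(n)\in g[A]$ for all $i$; equivalently, there are $m_1,\dots,m_k\in A$ with $g(m_i)=b+\sum_{n\in H}f_i(n)$ for each $i$.

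For each $i$ and each $n$ choose $\psi_i(n)\in\mathbb{N}$ in the fibre $g^{-1}(\{f_i(n)\})$ --- which is nonempty when $\alpha\le 1$, since then $g[\mathbb{N}]=\mathbb{N}$; when $\alpha>1$ one first replaces $f_i(n)$ by the largest member of the syndetic Beatty set $g[\mathbb{N}]$ (whose gaps are at most $\lceil\alpha\rceil$) that does not exceed it, the resulting bounded, $i$-independent discrepancy being absorbed into $b$ at the end --- and, exploiting the latitude left inside the fibre, chosen so that $\{\alpha\psi_i(n)+\gamma\}$ is as close as the arithmetic of $\alpha$ permits to a value common to all $i$. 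Now apply to the family $\{\psi_1,\dots,\psi_k\}$ the $J$-set property of $A$, in the standard form in which one may also insist that $\min H$ be as large as one pleases; this yields $a\in\mathbb{N}$ and $H\in\mathcal{P}_{f}(\mathbb{N})$ with $m_i:=a+\sum_{n\in H}\psi_i(n)\in A$ for every $i$.

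The main obstacle is that $g$ is only an approximate homomorphism: one merely has
\[
g(m_i)=\sum_{n\in H}\bigl\lfloor\alpha\psi_i(n)\bigr\rfloor+\Bigl\lfloor\,\alpha a+\gamma+\sum_{n\in H}\{\alpha\psi_i(n)\}\,\Bigr\rfloor,
\]
so that both the ``main'' sum --- through the gap between $\lfloor\alpha\psi_i(n)\rfloor$ and $f_i(n)$ --- and the ``correction'' floor can depend on $i$ and can a priori grow with $|H|$, over which the $J$-set definition affords no control whatsoever. Controlling this accumulation of rounding error \emph{simultaneously} for the $k$ sequences is where the real work lies. The purpose of the two preliminary refinements of $A$ and of the freedom in the choice of the $\psi_i(n)$ is precisely to force, once $\min H$ is taken large enough relative to $k$ and to the length of the interval confining $\{\alpha m+\gamma\}$, that $\sum_{n\in H}\lfloor\alpha\psi_i(n)\rfloor$ equals $\sum_{n\in H}f_i(n)$ up to an $i$-independent constant and that the correction floor is a single integer independent of $i$; the value of $b$ is then read off from $a$, $H$ and those constants. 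I expect the case $\alpha\le1$ to be the transparent one, the genuine difficulty being the case $\alpha>1$, where $g[\mathbb{N}]$ is a proper syndetic subset of $\mathbb{N}$ and the bounded perturbations must be tracked, together with making precise the $\alpha$-sensitive choice of the $\psi_i(n)$ that renders the correction term uniform. These are exactly the points settled in \cite[Theorem 4.6]{HJ}, from which the statement follows.
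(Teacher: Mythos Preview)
The paper does not give its own proof of this theorem; it simply records the result as \cite[Theorem~4.6]{HJ} and then explicitly remarks that Hindman and Johnson's argument is \emph{algebraic}, passing through ultrafilters in $J(\mathbb{N})$ (hence the Axiom of Choice), and that an elementary proof via the Ramsey property of $\mathcal{J}$ was only \emph{suspected} by them, not supplied. Your proposal, by contrast, sketches what such an elementary ``pull back, apply, push forward'' argument might look like and then ends with ``These are exactly the points settled in \cite[Theorem~4.6]{HJ}, from which the statement follows.'' That final clause is the only load-bearing line, and it coincides with what the paper does; but your description of \emph{how} \cite{HJ} settles those points is, on the paper's own testimony, inaccurate: \cite{HJ} does not carry out the elementary rounding-error bookkeeping you outline, it bypasses it via the algebra of $\beta\mathbb{N}$.

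More importantly, the elementary sketch itself has a genuine gap at exactly the place you flag as ``where the real work lies.'' The correction term
\[
\Bigl\lfloor\,\alpha a+\gamma+\sum_{n\in H}\{\alpha\psi_i(n)\}\,\Bigr\rfloor
\]
is a floor of a sum of $|H|$ fractional parts, and the $J$-set definition gives no bound whatsoever on $|H|$. Even if each $\{\alpha\psi_i(n)\}$ is confined to an interval of length $\varepsilon$, the sums for different $i$ may differ by as much as $|H|\varepsilon$, which is uncontrolled; making $\min H$ large does nothing for this. The same issue defeats your treatment of $\alpha>1$: the replacement of $f_i(n)$ by a nearby point of $g[\mathbb{N}]$ produces a discrepancy that is bounded termwise but is certainly not $i$-independent, and summing over $H$ again yields an unbounded, $i$-dependent error. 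Note too that for $\alpha>1$ the map $g$ is strictly increasing, so each nonempty fibre $g^{-1}(\{c\})$ is a singleton and there is no ``latitude left inside the fibre'' to exploit. Thus the mechanism you propose for making the floor $i$-independent does not work as stated, and the sketch establishes nothing beyond its terminal citation of \cite{HJ}; the accumulation problem you identify is real and is \emph{not} resolved in \cite{HJ} by the route you describe.
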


  The proofs of Theorems \ref{spectra of PS and AP}, \ref{Spectra of HSD}, \ref{Spectra of PUAD}, and \ref{spectra of PUBD}  are elementary but proof of the Theorem \ref{Spectrea of J-set} is algebraic. To prove the Theorem \ref{Spectrea of J-set} in \cite{HJ}, Hindman and Johnson considered $J$-set as a member of ultrafilter in $J(\mathbb{N})$ and therefore used Axiom of Choice. They also suspected that the proof of the Theorem \ref{Spectrea of J-set}  ca be proved using the ramsay property of $\mathcal{J}$ family. Now, using the results that we have discussed in this section so far, we get the following   that  will be used  repeatedly  in the next section.

  \begin{theorem}\label{spectra all}
  Let $\alpha>0$, and let $0<\gamma<1$.	If $\mathcal{F}$ $=$ $\mathcal{IF}$,$\mathcal{AP}$,$\mathcal{PS}$,
  	$\mathcal{HSD}$, $\mathcal{J}$, $\mathcal{PUAD}$ or $\mathcal{PUBD}$ then $g_{\alpha,\gamma}\left[\mathcal{F}\right]\subseteq\mathcal{F}$.
  \end{theorem}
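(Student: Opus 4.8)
The plan is to recognise that Theorem \ref{spectra all} is almost entirely a bookkeeping statement: it repackages, under the uniform heading $g_{\alpha,\gamma}\left[\mathcal{F}\right]\subseteq\mathcal{F}$, the per-family preservation results already assembled in this section, so that they can be invoked cleanly in Section \ref{Algebraic results}. Accordingly, I would dispose of the listed families one at a time by citation: for $\mathcal{F}=\mathcal{PS}$ and $\mathcal{F}=\mathcal{AP}$ the inclusion is exactly parts (1) and (2) of Theorem \ref{spectra of PS and AP}; for $\mathcal{F}=\mathcal{HSD}$ it is Theorem \ref{Spectra of HSD}; for $\mathcal{F}=\mathcal{PUAD}$ it is Theorem \ref{Spectra of PUAD}; for $\mathcal{F}=\mathcal{PUBD}$ it is Theorem \ref{spectra of PUBD}; and for $\mathcal{F}=\mathcal{J}$ it is Theorem \ref{Spectrea of J-set}.

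The only case not already covered by an earlier statement is $\mathcal{F}=\mathcal{IF}$, the family of infinite subsets of $\mathbb{N}$, and I would handle it directly. Since $\alpha>0$, we have $g_{\alpha,\gamma}(n)=\lfloor\alpha n+\gamma\rfloor\to\infty$ as $n\to\infty$, so for each $M\in\mathbb{N}$ the preimage $\{n\in\mathbb{N}:g_{\alpha,\gamma}(n)\le M\}$ is finite. Hence if $A$ is infinite, $g_{\alpha,\gamma}[A]$ is not contained in any initial segment $\{1,2,\ldots,M\}$, so $g_{\alpha,\gamma}[A]$ is infinite, i.e. $g_{\alpha,\gamma}\left[\mathcal{IF}\right]\subseteq\mathcal{IF}$. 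Combining this with the six cited inclusions yields the theorem.

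There is no genuine obstacle in this particular statement; the substantive content lives in the individual theorems it collects, the $\mathcal{J}$ case (Theorem \ref{Spectrea of J-set}) being the only one whose proof is not elementary. If one insisted on a self-contained argument, the point requiring care would be establishing, as in the proof of Theorem \ref{Spectra of HSD}, that after passing to a suitable arithmetic-progression subclass of $A$ the map $g_{\alpha,\gamma}$ becomes strictly increasing (hence injective), so that cardinality/density/sum-of-reciprocal estimates transfer from $A$ to $g_{\alpha,\gamma}[A]$; but for the present theorem that work has already been done upstream and need only be cited.
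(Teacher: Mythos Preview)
Your proposal is correct and matches the paper's treatment: the paper presents this theorem without a separate proof, introducing it with ``using the results that we have discussed in this section so far,'' i.e., exactly the per-family citations you list. Your explicit handling of the $\mathcal{IF}$ case is a harmless addition, since the paper leaves that triviality implicit.
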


\section{Algebraic proof}\label{Algebraic results}
According to the notations in \cite[Section 3]{HJ}, We consider the circle group $\mathbb{T}$ to be $\mathbb{R}/\mathbb{Z}$
and represent the points of $\mathbb{T}$ by points in the interval
$[-\frac{1}{2},\frac{1}{2})$. Given $\alpha>0$, we let $h_{\alpha}=g_{\alpha,1/2}$,
so that for $x\in\mathbb{N}$, $h_{\alpha}$ is the nearest integer
to $\alpha x$. We define $f_{\alpha}:\mathbb{N}\rightarrow\mathbb{T}$
by, for $x\in\mathbb{N}$, $f_{\alpha}\left(x\right)=\alpha x-h_{\alpha}\left(x\right)$
and we let $\widetilde{f}_{\alpha}:\beta\mathbb{N}\rightarrow\mathbb{T}$
be the continuous extension of $f_{\alpha}$. similarly $\widetilde{g_{\alpha,\gamma}}:\beta\mathbb{N}\rightarrow\beta\mathbb{N}$
and $\widetilde{h_{\alpha}}:\beta\mathbb{N}\rightarrow\beta\mathbb{N}$
are the continuous extension of $g_{\alpha,\gamma}$ and $h_{\alpha}$
respectively.
\begin{definition}\label{Zalpha}
	Let $\alpha>0$. Then $Z_{\alpha}=\left\{ p\in\beta\mathbb{N}:\widetilde{f}_{\alpha}\left(p\right)=0\right\} $.
	
\end{definition}

	We state the following Lemma from \cite[Lemma 3.2]{HJ}, which is very important and essential result  of this section.

\begin{lemma}\label{g_alpha_ghama-p=z_alpha_p}
	Let $\alpha>0$ and let $0<\gamma<1$. The function $\widetilde{f}_{\alpha}$
	is a homomorphism from $\beta\mathbb{N}$ into $\mathbb{T}$ and consequently,
	each idempotent is in $Z_{\alpha}$. The restriction of $\widetilde{h_{\alpha}}$
	to $Z_{\alpha}$ is an isomorphism onto $Z_{1/\alpha}$ whose inverse
	is the restriction of $\widetilde{h_{1/\alpha}}$ to $Z_{1/\alpha}$.
	For all $p\in Z_{\alpha}$, $\widetilde{g_{\alpha,\gamma}}\left(p\right)=\widetilde{h_{\alpha}}\left(p\right)$.
\end{lemma}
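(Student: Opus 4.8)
The plan is to prove the three assertions of the lemma in turn, relying throughout on three routine facts about $\beta\mathbb{N}$: a map $\phi\colon\mathbb{N}\to\mathbb{N}$ has a continuous extension $\widetilde\phi\colon\beta\mathbb{N}\to\beta\mathbb{N}$ with $\widetilde{\psi\circ\phi}=\widetilde\psi\circ\widetilde\phi$ for $\psi\colon\mathbb{N}\to\mathbb{N}$ (and $\widetilde{\phi\circ\psi}=\widetilde\phi\circ\widetilde\psi$ whenever $\phi$ takes values in a compact Hausdorff space); if $\phi$ and $\psi$ agree on some $A\in p$, then $\widetilde\phi(p)=\widetilde\psi(p)$; and $\widetilde\phi(p+q)=p\text{-}\lim_x q\text{-}\lim_y\phi(x+y)$ with all limits taken in $\beta\mathbb{N}$.

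For the first assertion, a direct computation shows $f_\alpha$ is a homomorphism from $(\mathbb{N},+)$ to $(\mathbb{T},+)$: for $x,y\in\mathbb{N}$ one has $f_\alpha(x+y)-f_\alpha(x)-f_\alpha(y)=h_\alpha(x)+h_\alpha(y)-h_\alpha(x+y)\in\mathbb{Z}$, so the two sides agree in $\mathbb{T}$. Since $\mathbb{T}$ is a compact Hausdorff topological group it coincides with its own topological center, so $\widetilde f_\alpha$ is a homomorphism (\cite[Corollary 4.22]{HS}; equivalently, $\widetilde f_\alpha(p+q)=p\text{-}\lim_x q\text{-}\lim_y\bigl(f_\alpha(x)+f_\alpha(y)\bigr)=\widetilde f_\alpha(p)+\widetilde f_\alpha(q)$ by continuity of addition on $\mathbb{T}$). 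If $e$ is idempotent in $\beta\mathbb{N}$ then $\widetilde f_\alpha(e)=\widetilde f_\alpha(e)+\widetilde f_\alpha(e)$, and $0$ is the only idempotent of the group $\mathbb{T}$, so $e\in Z_\alpha$.

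For the second assertion I would first check that $\widetilde h_\alpha[Z_\alpha]\subseteq Z_{1/\alpha}$ and that $\widetilde h_{1/\alpha}\circ\widetilde h_\alpha$ restricts to the identity on $Z_\alpha$. The arithmetic input is: writing $\alpha x=h_\alpha(x)+f_\alpha(x)$ with $f_\alpha(x)\in[-\tfrac12,\tfrac12)$, we get $\tfrac1\alpha h_\alpha(x)=x-\tfrac1\alpha f_\alpha(x)$, so $h_{1/\alpha}(h_\alpha(x))$ is the nearest integer to $x-\tfrac1\alpha f_\alpha(x)$, which equals $x$ once $|f_\alpha(x)|<\alpha/2$, and $f_{1/\alpha}(h_\alpha(x))\equiv-\tfrac1\alpha f_\alpha(x)\pmod{\mathbb{Z}}$, which lies in any prescribed neighbourhood of $0$ once $f_\alpha(x)$ is small. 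Given $p\in Z_\alpha$, continuity of $\widetilde f_\alpha$ together with $\widetilde f_\alpha(p)=0$ provides, for any neighbourhood of $0$ in $\mathbb{T}$, a set $A\in p$ on which $f_\alpha$ is that small; on such an $A$ the function $f_{1/\alpha}\circ h_\alpha$ is correspondingly near $0$, so $\widetilde f_{1/\alpha}\bigl(\widetilde h_\alpha(p)\bigr)=0$, i.e. $\widetilde h_\alpha(p)\in Z_{1/\alpha}$; and on a possibly smaller $A\in p$ the function $h_{1/\alpha}\circ h_\alpha$ agrees with the identity, so $\widetilde h_{1/\alpha}\bigl(\widetilde h_\alpha(p)\bigr)=p$. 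The reverse composition $\widetilde h_\alpha\circ\widetilde h_{1/\alpha}$ is handled identically with the roles of $\alpha$ and $1/\alpha$ exchanged; the smallness device makes the case $\alpha<1$ no harder than $\alpha\ge1$. Hence $\widetilde h_\alpha$ restricts to a continuous bijection of $Z_\alpha$ onto $Z_{1/\alpha}$ — a homeomorphism, as these are compact Hausdorff — with inverse $\widetilde h_{1/\alpha}|_{Z_{1/\alpha}}$.

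That this bijection is a homomorphism is the delicate point, because $h_\alpha$ is not additive on $\mathbb{N}$: the \emph{carry} $h_\alpha(x)+h_\alpha(y)-h_\alpha(x+y)=f_\alpha(x+y)-f_\alpha(x)-f_\alpha(y)$ is an integer that need not vanish. But for $p,q\in Z_\alpha$ I would choose $A\in p$ and $C\in q$ with $f_\alpha[A]$ and $f_\alpha[C]$ so close to $0$ that $f_\alpha(x)+f_\alpha(y)\in(-\tfrac12,\tfrac12)$ for $x\in A$, $y\in C$; the carry is then $0$ on $A\times C$, so $h_\alpha(x+y)=h_\alpha(x)+h_\alpha(y)$ there, and feeding this into the iterated-limit formula gives $\widetilde h_\alpha(p+q)=p\text{-}\lim_x q\text{-}\lim_y h_\alpha(x+y)=p\text{-}\lim_x q\text{-}\lim_y\bigl(h_\alpha(x)+h_\alpha(y)\bigr)=\widetilde h_\alpha(p)+\widetilde h_\alpha(q)$. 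Finally, for $\widetilde{g_{\alpha,\gamma}}=\widetilde h_\alpha$ on $Z_\alpha$, write $\alpha x+\gamma=h_\alpha(x)+\bigl(f_\alpha(x)+\gamma\bigr)$ with $h_\alpha(x)\in\mathbb{Z}$, so $g_{\alpha,\gamma}(x)=h_\alpha(x)+\lfloor f_\alpha(x)+\gamma\rfloor$; since $0<\gamma<1$ the set $[-\gamma,1-\gamma)$ is a neighbourhood of $0$ in $\mathbb{T}$ on which $\lfloor f_\alpha(x)+\gamma\rfloor=0$, so for $p\in Z_\alpha$ there is $A\in p$ with $f_\alpha[A]$ in that neighbourhood, whence $g_{\alpha,\gamma}$ and $h_\alpha$ agree on $A$ and therefore $\widetilde{g_{\alpha,\gamma}}(p)=\widetilde h_\alpha(p)$. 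I expect the homomorphism step of this last paragraph to be the main obstacle — reconciling the failure of additivity of $h_\alpha$ on $\mathbb{N}$ with its additivity after restriction to $Z_\alpha$ — with a secondary point being that multiplication by the scalar $1/\alpha$ is not well defined on $\mathbb{T}$, so every such manipulation must be carried out with real representatives on a suitably small set of $p$.
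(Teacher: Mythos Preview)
Your proof is correct. Note, however, that the paper does not actually prove this lemma: it is quoted verbatim from \cite[Lemma~3.2]{HJ} and stated without proof, so there is no ``paper's own proof'' to compare against. Your argument follows the standard route that one finds in \cite{BHK} and \cite{HJ}: use that $f_\alpha(x)\equiv\alpha x\pmod{\mathbb{Z}}$ to see $f_\alpha$ is a genuine homomorphism into the compact group $\mathbb{T}$ and hence extends to one on $\beta\mathbb{N}$; exploit $\widetilde f_\alpha(p)=0$ to pass to a member $A\in p$ on which $f_\alpha$ is uniformly small, so that on $A$ the maps $h_{1/\alpha}\circ h_\alpha$, $h_\alpha(x+y)$, and $g_{\alpha,\gamma}$ coincide with $\mathrm{id}$, $h_\alpha(x)+h_\alpha(y)$, and $h_\alpha$ respectively; and then read off the conclusions via $p$-limits. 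The only cosmetic point is your phrasing ``$[-\gamma,1-\gamma)$ is a neighbourhood of $0$ in $\mathbb{T}$'': strictly speaking this interval may overflow the chosen fundamental domain $[-\tfrac12,\tfrac12)$ when $\gamma>\tfrac12$, but of course $(-\min(\gamma,1-\gamma),\min(\gamma,1-\gamma))$ does the job, and your argument goes through unchanged.
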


For $\alpha>0$, and $0<\gamma<1,$ from \cite[Corollary 4.7]{HJ},
we get $\widetilde{g_{\alpha,\gamma}}\left[J\left(\mathbb{N}\right)\right]\subseteq J\left(\mathbb{N}\right)$.
As we know that $J\left(\mathbb{N}\right)=\beta\left(\mathcal{J}\right)$,
and this motivates us to prove the following theorem.
\begin{theorem}\label{g_alpha,ghama(beta_f)=beta-f}
	Let $\alpha>0$, and let $0<\gamma<1$. Let $\mathcal{F}$ be a family
	with Ramsey property of $\mathbb{N}$. Then $\widetilde{g_{\alpha,\gamma}}\left[\beta\left(\mathcal{F}\right)\right]\subseteq\beta\left(\mathcal{F}\right)$
	if $g_{\alpha,\gamma}\left[\mathcal{F}\right]\subseteq\mathcal{F}$.
\end{theorem}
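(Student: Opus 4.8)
The plan is to unwind the definitions and use the characterization $\mathcal{F}=\bigcup\beta(\mathcal{F})$ together with the fact that $\beta(\mathcal{F})$ consists precisely of those ultrafilters all of whose members lie in $\mathcal{F}$. So let $p\in\beta(\mathcal{F})$ and set $q=\widetilde{g_{\alpha,\gamma}}(p)$; I want to show $q\in\beta(\mathcal{F})$, i.e. that every $B\in q$ belongs to $\mathcal{F}$.

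\medskip

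First I would recall the standard description of the image ultrafilter under a continuous extension of a point map: for $B\subseteq\mathbb{N}$, one has $B\in q=\widetilde{g_{\alpha,\gamma}}(p)$ if and only if $g_{\alpha,\gamma}^{-1}[B]\in p$. (This is the usual fact that $\widetilde{g}(p)=\{B:g^{-1}[B]\in p\}$ for $g:\mathbb{N}\to\mathbb{N}$.) Next, fix an arbitrary $B\in q$. Then $A:=g_{\alpha,\gamma}^{-1}[B]\in p$, and since $p\in\beta(\mathcal{F})$ we get $A\in\mathcal{F}$. Now apply the hypothesis $g_{\alpha,\gamma}[\mathcal{F}]\subseteq\mathcal{F}$ to conclude $g_{\alpha,\gamma}[A]\in\mathcal{F}$. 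Finally observe $g_{\alpha,\gamma}[A]=g_{\alpha,\gamma}\big[g_{\alpha,\gamma}^{-1}[B]\big]\subseteq B$, and since $\mathcal{F}$ is upward hereditary (being a family), $B\in\mathcal{F}$. As $B\in q$ was arbitrary, $q\subseteq\mathcal{F}$, hence $q\in\beta(\mathcal{F})$, which gives $\widetilde{g_{\alpha,\gamma}}[\beta(\mathcal{F})]\subseteq\beta(\mathcal{F})$.

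\medskip

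The one point that needs a little care — and what I expect to be the only real obstacle — is justifying the identity $B\in\widetilde{g_{\alpha,\gamma}}(p)\iff g_{\alpha,\gamma}^{-1}[B]\in p$. This follows because $g_{\alpha,\gamma}:\mathbb{N}\to\mathbb{N}\subseteq\beta\mathbb{N}$ is a map into a compact Hausdorff space, its continuous extension $\widetilde{g_{\alpha,\gamma}}:\beta\mathbb{N}\to\beta\mathbb{N}$ sends $p$ to the unique limit of $g_{\alpha,\gamma}$ along $p$, and for a clopen set $\overline{B}$ in $\beta\mathbb{N}$ the preimage condition $\widetilde{g_{\alpha,\gamma}}(p)\in\overline{B}$ translates exactly to $g_{\alpha,\gamma}^{-1}[B]\in p$; I would cite \cite{HS} for this. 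Everything else is a mechanical application of upward heredity and the set-theoretic containment $g[g^{-1}[B]]\subseteq B$. Note that equality of images is not needed, only containment, so no injectivity hypothesis on $g_{\alpha,\gamma}$ is required here. Combined with Theorem \ref{spectra all}, this immediately yields $\widetilde{g_{\alpha,\gamma}}[\beta(\mathcal{F})]\subseteq\beta(\mathcal{F})$ for each of $\mathcal{F}=\mathcal{IF},\mathcal{AP},\mathcal{PS},\mathcal{HSD},\mathcal{J},\mathcal{PUAD},\mathcal{PUBD}$, which is presumably the intended use in the sequel.
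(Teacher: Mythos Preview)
Your proof is correct and is essentially identical to the paper's own argument: pick $p\in\beta(\mathcal{F})$, take an arbitrary member of $\widetilde{g_{\alpha,\gamma}}(p)$, pull it back along $g_{\alpha,\gamma}$ to obtain a member of $p$ (hence of $\mathcal{F}$), push forward using the hypothesis, and invoke upward heredity. The only difference is cosmetic (you swap the roles of the letters $A$ and $B$ and spell out the standard description of $\widetilde{g}(p)$, which the paper uses without comment).
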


\begin{proof}
	Let $p\in\beta\left(\mathcal{F}\right)$. To prove the theorem, it is sufficient to show  $\widetilde{g_{\alpha,\gamma}}\left(p\right)\subseteq \mathcal{F}$. Let $A\in\widetilde{g_{\alpha,\gamma}}\left(p\right)$, which implies $g_{\alpha,\gamma}^{-1}\left[A\right]\in p$. Taking $B=g_{\alpha,\gamma}^{-1}\left[A\right]$, we get $g_{\alpha,\gamma}\left[B\right]\subseteq A$.
	As $B$ is a $\mathcal{F}$-set, by the given  condition $g_{\alpha,\gamma}\left[B\right]$
	is a $\mathcal{F}$-set. So $A$ is a $\mathcal{F}$-set because
	of upward hereditary.
\end{proof}
For special case we get the following corollary from the above theorem.
\begin{corollary}
	If $\mathcal{F}$ $=$ $\mathcal{IF}$,$\mathcal{AP}$,$\mathcal{PS}$,
	$\mathcal{HSD}$, $\mathcal{J}$, $\mathcal{PUAD}$ or $\mathcal{PUBD}$,  then we have
	$\widetilde{g_{\alpha,\gamma}}\left[\beta\left(\mathcal{F}\right)\right]\subseteq\beta\left(\mathcal{F}\right)$.
\end{corollary}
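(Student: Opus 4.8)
The plan is to observe that this corollary is an immediate specialization of Theorem~\ref{g_alpha,ghama(beta_f)=beta-f} once we have verified its hypothesis for each of the listed families. Recall that Theorem~\ref{g_alpha,ghama(beta_f)=beta-f} asserts the implication $g_{\alpha,\gamma}\left[\mathcal{F}\right]\subseteq\mathcal{F} \implies \widetilde{g_{\alpha,\gamma}}\left[\beta\left(\mathcal{F}\right)\right]\subseteq\beta\left(\mathcal{F}\right)$, valid for any family $\mathcal{F}$ of subsets of $\mathbb{N}$ with the Ramsey property. So the only thing to check is that each of $\mathcal{IF}$, $\mathcal{AP}$, $\mathcal{PS}$, $\mathcal{HSD}$, $\mathcal{J}$, $\mathcal{PUAD}$ and $\mathcal{PUBD}$ both (i) has the Ramsey property and (ii) satisfies $g_{\alpha,\gamma}\left[\mathcal{F}\right]\subseteq\mathcal{F}$.

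For (i), the Ramsey property of all seven families was recorded in the introduction as part of the list of standard examples of families with the Ramsey property. For (ii), this is precisely the content of Theorem~\ref{spectra all}, which consolidates Theorem~\ref{spectra of PS and AP} (for $\mathcal{PS}$ and $\mathcal{AP}$), Theorem~\ref{Spectra of HSD} (for $\mathcal{HSD}$), Theorem~\ref{Spectra of PUAD} and Theorem~\ref{spectra of PUBD} (for $\mathcal{PUAD}$ and $\mathcal{PUBD}$), the trivial observation that a strictly increasing integer-valued map sends infinite sets to infinite sets (for $\mathcal{IF}$), and Theorem~\ref{Spectrea of J-set} (for $\mathcal{J}$). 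Thus for each of the seven choices of $\mathcal{F}$ the hypothesis of Theorem~\ref{g_alpha,ghama(beta_f)=beta-f} is met, and the conclusion $\widetilde{g_{\alpha,\gamma}}\left[\beta\left(\mathcal{F}\right)\right]\subseteq\beta\left(\mathcal{F}\right)$ follows directly.

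Since the substantive work has already been carried out — the elementary density/divergence estimates in Section~\ref{Elementary results} and the algebraic handling of $J$-sets in \cite{HJ} — there is essentially no obstacle remaining here; the corollary is a bookkeeping step that packages Theorem~\ref{spectra all} and Theorem~\ref{g_alpha,ghama(beta_f)=beta-f} together. If any care is needed at all, it is only in making sure each named family genuinely appears among those covered by Theorem~\ref{spectra all}, which it does by construction.
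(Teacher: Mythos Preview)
Your proposal is correct and follows exactly the same approach as the paper: invoke Theorem~\ref{g_alpha,ghama(beta_f)=beta-f} after noting that each listed family has the Ramsey property and satisfies $g_{\alpha,\gamma}[\mathcal{F}]\subseteq\mathcal{F}$ by Theorem~\ref{spectra all}. One tiny imprecision: $g_{\alpha,\gamma}$ need not be strictly increasing when $\alpha<1$, but since $g_{\alpha,\gamma}(n)\to\infty$ the image of an infinite set is still infinite, so the $\mathcal{IF}$ case goes through regardless.
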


\begin{proof}
	Follows from the fact that for $\mathcal{F}$ $=$  $\mathcal{IF}$,$\mathcal{AP}$,$\mathcal{PS}$,
	$\mathcal{HSD}$, $\mathcal{J}$, $\mathcal{PUAD}$ or $\mathcal{PUBD}$,  $\mathcal{F}$
	is a family with Ramsey property and $g_{\alpha,\gamma}\left[\mathcal{F}\right]\subseteq\mathcal{F}$ from the Theorem \ref{spectra all}.
\end{proof}
 In \cite{LL}, Li and Liang proved the following result by dynamical approach that is the main result of this  section. The technique of the proof of  the following corollary is   same as \cite[Corollary 4.8]{HJ}.
\begin{corollary}
	Let $\alpha>0$, and let $0<\gamma<1$. Let $\mathcal{F}$ be a shift
	invariant family with Ramsey property of $\mathbb{N}$
	such that $g_{\alpha,\gamma}\left[\mathcal{F}\right]\subseteq\mathcal{F}$.
	If $A$ is an essential $\mathcal{F}$-set in $\mathbb{N}$, then $g_{\alpha,\gamma}\left[A\right]$
	is an essential $\mathcal{F}$-set.
\end{corollary}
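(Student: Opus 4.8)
The plan is to lift the set‑level statement to $\beta\mathbb{N}$ and use that $\widetilde{g_{\alpha,\gamma}}$ carries idempotents of $\beta(\mathcal{F})$ to idempotents of $\beta(\mathcal{F})$. First I would record the setup: since $\mathcal{F}$ is shift invariant, \cite[Theorem 5.1.2]{C} gives that $\beta(\mathcal{F})$ is a left ideal of $\beta\mathbb{N}$, hence a subsemigroup, so by Ellis' theorem $E(\beta(\mathcal{F}))\neq\emptyset$ and the notion of essential $\mathcal{F}$‑set is meaningful at both ends of the desired implication. By hypothesis $A$ is an essential $\mathcal{F}$‑set, so fix an idempotent $p\in\beta(\mathcal{F})$ with $A\in p$, and set $q=\widetilde{g_{\alpha,\gamma}}(p)$.

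The body of the proof then consists of checking three things: (i) $g_{\alpha,\gamma}[A]\in q$; (ii) $q$ is an idempotent; (iii) $q\in\beta(\mathcal{F})$. For (i), recall $B\in q$ iff $g_{\alpha,\gamma}^{-1}[B]\in p$; since $A\subseteq g_{\alpha,\gamma}^{-1}\!\left[g_{\alpha,\gamma}[A]\right]$ and $p$ is an ultrafilter containing $A$, the superset $g_{\alpha,\gamma}^{-1}\!\left[g_{\alpha,\gamma}[A]\right]$ lies in $p$, so $g_{\alpha,\gamma}[A]\in q$. For (ii), apply Lemma \ref{g_alpha_ghama-p=z_alpha_p}: every idempotent of $\beta\mathbb{N}$ lies in $Z_{\alpha}$, so $p\in Z_{\alpha}$ and therefore $q=\widetilde{g_{\alpha,\gamma}}(p)=\widetilde{h_{\alpha}}(p)$; since the restriction of $\widetilde{h_{\alpha}}$ to $Z_{\alpha}$ is an isomorphism (in particular a homomorphism) and $p\cdot p=p$, we get $q\cdot q=\widetilde{h_{\alpha}}(p)\cdot\widetilde{h_{\alpha}}(p)=\widetilde{h_{\alpha}}(p\cdot p)=\widetilde{h_{\alpha}}(p)=q$. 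For (iii), this is exactly Theorem \ref{g_alpha,ghama(beta_f)=beta-f} applied with the hypothesis $g_{\alpha,\gamma}[\mathcal{F}]\subseteq\mathcal{F}$, which yields $\widetilde{g_{\alpha,\gamma}}\!\left[\beta(\mathcal{F})\right]\subseteq\beta(\mathcal{F})$, hence $q\in\beta(\mathcal{F})$.

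Combining (i)–(iii), $q$ is an idempotent of $\beta(\mathcal{F})$ with $g_{\alpha,\gamma}[A]\in q$, which is precisely the definition of $g_{\alpha,\gamma}[A]$ being an essential $\mathcal{F}$‑set, completing the argument. The proof is essentially an assembly of earlier results (the same template as \cite[Corollary 4.8]{HJ}), so I do not expect a genuine obstacle; the one point requiring care is (ii): the idempotency of $q$ should be deduced from the homomorphism property of $\widetilde{h_{\alpha}}$ \emph{on $Z_{\alpha}$} (where $\widetilde{h_{\alpha}}$ need not be a homomorphism on all of $\beta\mathbb{N}$), together with the fact — also from Lemma \ref{g_alpha_ghama-p=z_alpha_p} — that idempotents already reside in $Z_{\alpha}$ so that $\widetilde{g_{\alpha,\gamma}}$ and $\widetilde{h_{\alpha}}$ agree on $p$. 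A secondary point worth stating explicitly is the role of shift‑invariance: it is exactly what guarantees $\beta(\mathcal{F})$ is a subsemigroup, so that essential $\mathcal{F}$‑sets exist and the conclusion has content.
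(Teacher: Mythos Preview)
Your proof is correct and follows essentially the same approach as the paper: pick an idempotent $p\in\beta(\mathcal{F})$ with $A\in p$, push it forward to $q=\widetilde{g_{\alpha,\gamma}}(p)$, and then use Theorem \ref{g_alpha,ghama(beta_f)=beta-f} to get $q\in\beta(\mathcal{F})$ and Lemma \ref{g_alpha_ghama-p=z_alpha_p} (via $p\in Z_\alpha$ and $q=\widetilde{h_\alpha}(p)$) to get that $q$ is idempotent. If anything, you are more explicit than the paper in justifying $g_{\alpha,\gamma}[A]\in q$ and in spelling out why shift-invariance is needed to make $\beta(\mathcal{F})$ a subsemigroup.
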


\begin{proof}
	By the definition of essential $\mathcal{F}$ set, pick  an idempotent $p\in\overline{A}\cap\beta\left(\mathcal{F}\right)$.
	Then $g_{\alpha,\gamma}\left[A\right]\in\widetilde{g_{\alpha,\gamma}}\left(p\right)$. It is sufficient to prove that $\widetilde{g_{\alpha,\gamma}}\left(p\right)$ is an idempotent. 
	By  Theorem \ref{g_alpha,ghama(beta_f)=beta-f},  $\widetilde{g_{\alpha,\gamma}}\left(p\right)\in\beta\left(\mathcal{F}\right)$.
	 Since $p$ is an idempotent and is therefore in
	$Z_{\alpha}$, while by Lemma \ref{g_alpha_ghama-p=z_alpha_p}, $\widetilde{g_{\alpha,\gamma}}\left(p\right)=\widetilde{h_{\alpha}}\left(p\right)$
	and $\widetilde{h_{\alpha}}\left(p\right)$ is an idempotent. Hence
	$g_{\alpha,\gamma}\left[A\right]$ is an essential $\mathcal{F}$-set.
	
	For special case we get the following corollary:
\end{proof}
\begin{corollary}
	Let $\alpha>0$, and let $0<\gamma<1$. For $\mathcal{F}$ $=$  $\mathcal{IF}$,$\mathcal{AP}$,$\mathcal{PS}$,
	$\mathcal{HSD}$, $\mathcal{J}$, $\mathcal{PUAD}$  or $\mathcal{PUBD}$.  If $A$ is  an essential $\mathcal{F}$-set in $\mathbb{N}$,  then $g_{\alpha,\gamma}\left[A\right]$ is an essential $\mathcal{F}$-set.
\end{corollary}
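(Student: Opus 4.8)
The plan is to derive this statement as an immediate instance of the preceding corollary, so the only work is to check that each of the seven listed families satisfies its three hypotheses: it is a family with the Ramsey property, it is shift invariant, and it is mapped into itself by $g_{\alpha,\gamma}$. The last of these is exactly Theorem \ref{spectra all}, and the Ramsey property of all seven families was already recorded in the list of examples in Section \ref{Elementary results} (in fact in the introduction). Thus the heart of the argument is verifying shift invariance: for each family $\mathcal{F}$ in the list and each $s\in\mathbb{N}$ one must show that $E\in\mathcal{F}$ implies $s+E\in\mathcal{F}$. Once that is in hand, the earlier theorem quoted from \cite{C} (that $\beta(\mathcal{F})$ is a left ideal precisely when $\mathcal{F}$ is left shift invariant) guarantees that $\beta(\mathcal{F})$ is a left ideal, hence a subsemigroup, so the notion of essential $\mathcal{F}$-set is meaningful for each of these families, and the preceding corollary applies verbatim.

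I would dispatch the shift-invariance checks family by family. For $\mathcal{IF}$ it is trivial, since a translate of an infinite set is infinite. For $\mathcal{AP}$, translating a $k$-term arithmetic progression by $s$ produces a $k$-term arithmetic progression with the same common difference, so $s+E$ contains arbitrarily long progressions whenever $E$ does. For $\mathcal{PS}$, piecewise syndeticity is preserved under translation: this is standard from the definition in Section \ref{Elementary results}, and can also be read off from $\overline{s+E}=\lambda_{s}[\overline{E}]$ together with the fact that $\lambda_{s}$ carries minimal left ideals of $\beta\mathbb{N}$ onto minimal left ideals, so $\overline{s+E}$ meets $K(\beta\mathbb{N})$ iff $\overline{E}$ does. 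For $\mathcal{PUAD}$ and $\mathcal{PUBD}$, both $\overline{d}$ and $d^{\star}$ are translation invariant, so $s+E$ lies in the family iff $E$ does. For $\mathcal{HSD}$, if $\sum_{n\in E}\frac{1}{n}=\infty$ then, discarding the finitely many $m\in E$ with $m<s$ and using $\frac{1}{m+s}\ge\frac{1}{2m}$ for $m\ge s$, one gets $\sum_{n\in s+E}\frac{1}{n}\ge\tfrac12\sum_{m\in E,\,m\ge s}\frac{1}{m}=\infty$; the reverse implication follows from $\frac{1}{m+s}\le\frac{1}{m}$, so $\mathcal{HSD}$ is shift invariant. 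Finally, for $\mathcal{J}$ one checks that a translate of a $J$-set is a $J$-set: if $B$ is a $J$-set and $F\in\mathcal{P}_{f}({}^{\mathbb{N}}\mathbb{N})$ is witnessed by $a\in\mathbb{N}$ and $H\in\mathcal{P}_{f}(\mathbb{N})$ with $a+\sum_{n\in H}f(n)\in B$ for each $f\in F$, then $s+a$ and the same $H$ witness the corresponding combination inside $s+B$; equivalently, $\beta(\mathcal{J})=J(\mathbb{N})$ is known to be a (two-sided, hence left) ideal of $\beta\mathbb{N}$.

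With shift invariance in hand the conclusion is immediate: each such $\mathcal{F}$ is a shift invariant family with the Ramsey property satisfying $g_{\alpha,\gamma}[\mathcal{F}]\subseteq\mathcal{F}$ by Theorem \ref{spectra all}, so the preceding corollary yields that $g_{\alpha,\gamma}[A]$ is an essential $\mathcal{F}$-set whenever $A$ is. I do not expect a genuine obstacle here; the only points needing a moment's care are the comparison estimate that $\mathcal{HSD}$ is shift invariant and the bookkeeping that a translate of a $J$-set is again a $J$-set, both of which are routine. If one wanted to shorten the write-up, the two density cases and $\mathcal{IF}$, $\mathcal{AP}$ could simply be asserted, and the argument reduced to the $\mathcal{HSD}$ and $\mathcal{J}$ verifications plus a citation of the preceding corollary.
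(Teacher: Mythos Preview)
Your proposal is correct and follows exactly the paper's approach: the paper's proof simply asserts that each of the listed families is a shift invariant family with the Ramsey property satisfying $g_{\alpha,\gamma}[\mathcal{F}]\subseteq\mathcal{F}$ (citing Theorem~\ref{spectra all}) and invokes the preceding corollary. You have merely supplied the routine verifications of shift invariance that the paper takes for granted.
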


\begin{proof}
	Follows from the fact that for $\mathcal{F}$ $=$  $\mathcal{IF}$, $\mathcal{AP}$, $\mathcal{PS}$,
	$\mathcal{HSD}$, $\mathcal{J}$,  $\mathcal{PUAD}$ or $\mathcal{PUBD}$ , $\mathcal{F}$
	is a shift invariant   family with Ramsey property and $g_{\alpha,\gamma}\left[\mathcal{F}\right]\subseteq\mathcal{F}$ by the Theorem \ref{spectra all}.
\end{proof}
Above corollary implies that, $IP$-sets , $C$-sets, $D$-sets and quasi
central sets are preserved under nonhomogeneous spectra.
\begin{theorem}
	Let $\alpha>0$, and let $0<\gamma<1$. Let $\mathcal{F}$ be  a shift
	invariant family with Ramsey property of  $\mathbb{N}$ such that $g_{\alpha,\gamma}\left[\mathcal{F}\right]\subseteq\mathcal{F}$.
	If $A$ is an essential $\mathcal{F}^{\star}$-set in $\mathbb{N}$, 
	then $g_{\alpha,\gamma}\left[A\right]$ is an essential $\mathcal{F}^{\star}$-set in $\mathbb{N}$.
\end{theorem}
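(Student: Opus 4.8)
The plan is to recast the $\star$-condition in terms of idempotents and then pull back along $\widetilde{g_{\alpha,\gamma}}$, essentially mimicking the proof of the preceding corollary. Since an essential $\mathcal{F}$-set is by definition a member of some idempotent of $\beta(\mathcal{F})$, and since for an ultrafilter $p$ a set meets every member of $p$ precisely when it belongs to $p$, a set $A\subseteq\mathbb{N}$ is an essential $\mathcal{F}^{\star}$-set if and only if $A\in p$ for \emph{every} idempotent $p\in\beta(\mathcal{F})$. Hence it suffices to fix an arbitrary idempotent $q\in\beta(\mathcal{F})$ and prove that $g_{\alpha,\gamma}[A]\in q$.

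First I would construct an idempotent $p\in\beta(\mathcal{F})$ with $\widetilde{g_{\alpha,\gamma}}(p)=q$. Because $q$ is an idempotent of $\beta\mathbb{N}$ and $\widetilde{f}_{1/\alpha}$ is a homomorphism into the group $\mathbb{T}$ (Lemma \ref{g_alpha_ghama-p=z_alpha_p} applied with $1/\alpha$ in place of $\alpha$), we get $q\in Z_{1/\alpha}$. Set $p=\widetilde{h_{1/\alpha}}(q)$. By Lemma \ref{g_alpha_ghama-p=z_alpha_p}, the restriction of $\widetilde{h_{1/\alpha}}$ to $Z_{1/\alpha}$ is an isomorphism onto $Z_{\alpha}$, so $p$ is an idempotent lying in $Z_{\alpha}$ with $\widetilde{h_{\alpha}}(p)=q$; and since $p\in Z_{\alpha}$, the same lemma yields $\widetilde{g_{\alpha,\gamma}}(p)=\widetilde{h_{\alpha}}(p)=q$.

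The step I expect to be the real obstacle is showing that $p\in\beta(\mathcal{F})$. Note that $p=\widetilde{h_{1/\alpha}}(q)=\widetilde{g_{1/\alpha,\gamma}}(q)$, the last equality again by Lemma \ref{g_alpha_ghama-p=z_alpha_p} since $q\in Z_{1/\alpha}$; so, by Theorem \ref{g_alpha,ghama(beta_f)=beta-f} applied with $1/\alpha$ in place of $\alpha$, one obtains $p\in\beta(\mathcal{F})$ \emph{provided} $g_{1/\alpha,\gamma}[\mathcal{F}]\subseteq\mathcal{F}$, i.e. provided $\beta(\mathcal{F})$ is also invariant under $\widetilde{h_{1/\alpha}}$. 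The hypothesis as literally stated supplies only $g_{\alpha,\gamma}[\mathcal{F}]\subseteq\mathcal{F}$, so the honest options are either to add the symmetric requirement $g_{1/\alpha,\gamma}[\mathcal{F}]\subseteq\mathcal{F}$, or to observe that for each of the families $\mathcal{IF},\mathcal{AP},\mathcal{PS},\mathcal{HSD},\mathcal{J},\mathcal{PUAD},\mathcal{PUBD}$ of Theorem \ref{spectra all} one has $g_{\beta,\delta}[\mathcal{F}]\subseteq\mathcal{F}$ for \emph{all} $\beta>0$ and $0<\delta<1$, so that $p\in\beta(\mathcal{F})$ holds automatically there — which is precisely the case relevant for the corollaries that follow.

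Granting $p\in\beta(\mathcal{F})$, the conclusion is immediate: $p$ is an idempotent of $\beta(\mathcal{F})$, so by the reformulation in the first paragraph $A\in p$; since $A\subseteq g_{\alpha,\gamma}^{-1}\!\left[g_{\alpha,\gamma}[A]\right]$ and $p$ is an ultrafilter, $g_{\alpha,\gamma}^{-1}\!\left[g_{\alpha,\gamma}[A]\right]\in p$, which says exactly that $g_{\alpha,\gamma}[A]\in\widetilde{g_{\alpha,\gamma}}(p)=q$. As $q$ was an arbitrary idempotent of $\beta(\mathcal{F})$, $g_{\alpha,\gamma}[A]$ lies in every such idempotent, hence meets every essential $\mathcal{F}$-set, i.e. $g_{\alpha,\gamma}[A]$ is an essential $\mathcal{F}^{\star}$-set.
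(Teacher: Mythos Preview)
Your argument is essentially the paper's own proof with the roles of the variables swapped: where the paper fixes an idempotent $p\in\beta(\mathcal{F})$ and applies the $\mathcal{F}^{\star}$-hypothesis to $\widetilde{h_{1/\alpha}}(p)$, you fix $q$ and set $p=\widetilde{h_{1/\alpha}}(q)$; the use of Lemma~\ref{g_alpha_ghama-p=z_alpha_p} to invert $\widetilde{h_{\alpha}}$ and the appeal to Theorem~\ref{g_alpha,ghama(beta_f)=beta-f} are identical.

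You are also right to flag the hypothesis issue. To conclude that $\widetilde{h_{1/\alpha}}(q)\in\beta(\mathcal{F})$ one needs Theorem~\ref{g_alpha,ghama(beta_f)=beta-f} with the parameter $1/\alpha$ (and $\gamma=1/2$), i.e.\ one needs $g_{1/\alpha,1/2}[\mathcal{F}]\subseteq\mathcal{F}$, which does not follow from the single assumption $g_{\alpha,\gamma}[\mathcal{F}]\subseteq\mathcal{F}$ for the fixed pair $(\alpha,\gamma)$. The paper's proof invokes Theorem~\ref{g_alpha,ghama(beta_f)=beta-f} at exactly this point without commenting on the change of parameters, so the gap you spotted is present there as well; it is harmless for the intended applications because every family listed in Theorem~\ref{spectra all} satisfies $g_{\beta,\delta}[\mathcal{F}]\subseteq\mathcal{F}$ for \emph{all} $\beta>0$ and $0<\delta<1$, exactly as you note.
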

\begin{proof}
	 To prove the theorem, we need to show that $g_{\alpha,\gamma}\left[A\right]$ is a member
	of every idempotent in $\beta\left(\mathcal{F}\right)$.
	 Let  $p$ be  an idempotent in  $ \beta\left(\mathcal{F}\right)$. 
	 By Lemma \ref{g_alpha_ghama-p=z_alpha_p} and  Theorem \ref{g_alpha,ghama(beta_f)=beta-f}
    	we get $\widetilde{h_{1/\alpha}}\left(p\right)$ is an idempotent
	in $ \beta\left(\mathcal{F}\right)$, so $A\in\widetilde{h_{1/\alpha}}\left(p\right)$.
	Again by  Lemma \ref{g_alpha_ghama-p=z_alpha_p}  $\widetilde{g_{\alpha,\gamma}}\left(\widetilde{h_{1/\alpha}}\left(p\right)\right)=p$
	and so $g_{\alpha,\gamma}\left[A\right]\in p$.
\end{proof}
The following corollary follows from the above theorem and Theorem \ref{spectra all}.
\begin{corollary}
	Let $\alpha>0$, and let $0<\gamma<1$. For $\mathcal{F}$  $=$ $\mathcal{IF}$,$\mathcal{AP}$,$\mathcal{PS}$,
	$\mathcal{HSD}$, $\mathcal{J}$,  $\mathcal{PUAD}$ or $\mathcal{PUBD}$. If
	$A$ is an essential $\mathcal{F}^{\star}$-set in $\mathbb{N}$, Then
	$g_{\alpha,\gamma}\left[A\right]$ is an essential $\mathcal{F}^{\star}$-set.
\end{corollary}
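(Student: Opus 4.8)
The plan is to read this off from the theorem immediately above, whose hypotheses on a family $\mathcal{F}$ of subsets of $\mathbb{N}$ amount to exactly three things: that $\mathcal{F}$ has the Ramsey property, that $\mathcal{F}$ is left shift-invariant, and that $g_{\alpha,\gamma}\left[\mathcal{F}\right]\subseteq\mathcal{F}$. For each of the seven families named in the statement, two of these are already in hand: the Ramsey property is recorded in the list of examples in the Introduction, and the inclusion $g_{\alpha,\gamma}\left[\mathcal{F}\right]\subseteq\mathcal{F}$, valid for all $\alpha>0$ and $0<\gamma<1$, is precisely Theorem \ref{spectra all}. So the entire task reduces to checking that each of $\mathcal{IF}$, $\mathcal{AP}$, $\mathcal{PS}$, $\mathcal{HSD}$, $\mathcal{J}$, $\mathcal{PUAD}$, $\mathcal{PUBD}$ is left shift-invariant, i.e.\ that $sE\in\mathcal{F}$ whenever $E\in\mathcal{F}$ and $s\in\mathbb{N}$, where $sE=\{se:e\in E\}$.

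Most of these verifications are one-line counting or density arguments that I would simply carry out. For $\mathcal{IF}$ it is immediate, since $e\mapsto se$ is injective. For $\mathcal{HSD}$ one has $\sum_{m\in sE}1/m=(1/s)\sum_{e\in E}1/e$, so divergence of the harmonic sum is preserved. For $\mathcal{AP}$, dilating an arithmetic progression of length $k$ by $s$ produces an arithmetic progression of length $k$ (with common difference multiplied by $s$), so $sE$ still contains arbitrarily long progressions. For $\mathcal{PUAD}$ and $\mathcal{PUBD}$ one uses the identity $|sE\cap[1,sn]|=|E\cap[1,n]|$, and its analogue on the intervals $(sm,sn]$, to conclude $\overline d(sE)\ge(1/s)\,\overline d(E)>0$ and $d^{\star}(sE)\ge(1/s)\,d^{\star}(E)>0$. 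For $\mathcal{PS}$ and $\mathcal{J}$ the cleanest route is structural rather than combinatorial: $\beta(\mathcal{PS})=\overline{K(\beta\mathbb{N})}$ and $\beta(\mathcal{J})=J(\mathbb{N})$ are both known to be two-sided, hence in particular left, ideals of $\beta\mathbb{N}$ (see \cite{HS} and \cite{DHS}), so by the theorem of \cite{C} quoted in the Introduction --- that $\beta\mathcal{F}$ is a left ideal iff $\mathcal{F}$ is left shift-invariant --- both $\mathcal{PS}$ and $\mathcal{J}$ are left shift-invariant; alternatively one can unwind the combinatorial definitions of piecewise syndetic set and of $J$-set directly.

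Once left shift-invariance is established for all seven families, the theorem immediately above applies to each of them verbatim, and the asserted conclusion follows: if $A$ is an essential $\mathcal{F}^{\star}$-set then so is $g_{\alpha,\gamma}\left[A\right]$. I do not expect any genuine obstacle here, since the statement is a formal corollary of earlier results; the only mildly delicate point is the shift-invariance of $\mathcal{PS}$ and $\mathcal{J}$, where the elementary density computation available for the other families does not apply and one instead invokes the ideal structure of $\overline{K(\beta\mathbb{N})}$ and of $J(\mathbb{N})$ --- but even that reduces to citing standard facts. Indeed the same bookkeeping shows the whole statement can be given the short proof ``combine the theorem above with Theorem \ref{spectra all} and the shift-invariance of each listed family'', exactly mirroring the proof of the corollary on essential $\mathcal{F}$-sets a few lines earlier.
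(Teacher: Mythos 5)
Your overall route is exactly the paper's: feed each of the seven families into the theorem immediately above, with the Ramsey property, shift-invariance, and Theorem \ref{spectra all} supplying the hypotheses (the paper's own proof is just this citation). The problem is that the invariance you actually verify is the wrong one. The ambient semigroup in this section is $(\mathbb{N},+)$ --- idempotents of $(\beta\mathbb{N},+)$, $K(\beta\mathbb{N})$, $J(\mathbb{N})$, etc. --- so the paper's definition of left shift-invariance, written multiplicatively for a general semigroup as $sE=\left\{ st:t\in E\right\}$, specializes here to the translate $s+E=\left\{ s+t:t\in E\right\}$, not to the dilation $\left\{ se:e\in E\right\}$. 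Your computations $\sum_{m\in sE}1/m=(1/s)\sum_{e\in E}1/e$, $|sE\cap[1,sn]|=|E\cap[1,n]|$, and ``common difference multiplied by $s$'' all concern dilations, and therefore do not establish the hypothesis the preceding theorem needs --- translation invariance is what makes $\beta\left(\mathcal{F}\right)$ a left ideal, hence a subsemigroup, of $(\beta\mathbb{N},+)$, so that essential $\mathcal{F}$-sets are even defined and the theorem applies.

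The gap is easily repaired, since additive translation invariance is immediate (indeed easier than what you checked) for all seven families: a translate of an infinite set is infinite; $\sum_{t\in E}1/(s+t)$ diverges together with $\sum_{t\in E}1/t$ because $s+t\le (s+1)t$; a translated arithmetic progression is an arithmetic progression of the same length; upper density and upper Banach density are unchanged by translation; and piecewise syndeticity and the $J$-set property are translation invariant --- here your structural argument, that $\overline{K(\beta\mathbb{N})}$ and $J(\mathbb{N})$ are ideals of $(\beta\mathbb{N},+)$ so the quoted criterion gives left shift-invariance, is correct and is already the additive statement. One further point worth noting: the proof of the theorem you invoke passes through $\widetilde{h_{1/\alpha}}$, so it implicitly needs the spectra-preservation hypothesis for $1/\alpha$ as well as for $\alpha$; this is harmless here precisely because Theorem \ref{spectra all} holds for every $\alpha>0$ and $0<\gamma<1$. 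With the translation-invariance substitution made, your argument coincides with the paper's proof.
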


From the above corollary, it is clear that $IP^{\star}$-sets, $C^{\star}$-sets,
$D^{\star}$-sets and  quasi central$^{\star}$ sets are preserved under
nonhomogeneous spectra.

This is the proper time where, we can discuss about the preservation of ideals (associated with essential $\mathcal{F}$-sets) of $\beta\mathbb{N}$ by $\widetilde{g_{\alpha,\gamma}}$. To do so, 
We state the following theorem from \cite[Theorem 5.2]{HJ},  which will be used in the next theorem.
\begin{theorem}\label{Preservation of ideal}
	Let $I$ be a subset of $\beta\mathbb{N}$ which is a left ideal of
	$\left(\beta\mathbb{Z},+\right)$ and assume that whenever $0<\alpha$
	and $0<\gamma<1$, one has $\widetilde{g_{\alpha,\gamma}}\left[I\right]\subseteq I$.
	Let $0<\alpha\leq1$ and let $0<\gamma<1$. Then $\widetilde{g_{\alpha,\gamma}}\left[I\right]=I$.
\end{theorem}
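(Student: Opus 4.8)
The plan is to prove the two inclusions $\widetilde{g_{\alpha,\gamma}}\left[I\right]\subseteq I$ and $I\subseteq\widetilde{g_{\alpha,\gamma}}\left[I\right]$ separately; only the second requires the hypothesis $\alpha\leq 1$. The first inclusion is exactly part of the stated hypothesis on $I$, so there is nothing to do there. For the reverse inclusion, I would fix $p\in I$ and seek a point $q\in I$ with $\widetilde{g_{\alpha,\gamma}}\left(q\right)=p$. The natural candidate is $q=\widetilde{h_{1/\alpha}}\left(p\right)$: by Lemma \ref{g_alpha_ghama-p=z_alpha_p}, every idempotent lies in $Z_{1/\alpha}$, but more importantly we need $p\in Z_{\alpha'}$ for the relevant $\alpha'$ to make the inversion work. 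The key algebraic fact from Lemma \ref{g_alpha_ghama-p=z_alpha_p} is that $\widetilde{h_{\alpha}}$ restricted to $Z_{\alpha}$ is an isomorphism onto $Z_{1/\alpha}$ with inverse $\widetilde{h_{1/\alpha}}$ restricted to $Z_{1/\alpha}$, and that $\widetilde{g_{\alpha,\gamma}}$ agrees with $\widetilde{h_{\alpha}}$ on $Z_{\alpha}$.

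The first real step is to observe that, since $I$ is a left ideal of $\left(\beta\mathbb{Z},+\right)$, it contains a minimal left ideal and hence an idempotent; in fact every point of $I$ can be absorbed into an idempotent context, but what I actually need is that $I$ is a union of minimal left ideals or at least that for $p \in I$ one has $p = p + p'$ for a suitable idempotent $p'$ — or, more simply, that it suffices to check the claim on a generating set of $I$ under left translation. A cleaner route: since $I$ is a left ideal closed under $\widetilde{g_{\alpha,\gamma}}$ for \emph{all} parameters $\alpha,\gamma$, apply the hypothesis with $1/\alpha$ in place of $\alpha$ (legitimate since $1/\alpha>0$) to get $\widetilde{g_{1/\alpha,\gamma}}\left[I\right]\subseteq I$, and observe that on $Z_{1/\alpha}$ we have $\widetilde{g_{1/\alpha,\gamma}} = \widetilde{h_{1/\alpha}}$. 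Thus if $p\in I$ happens to lie in $Z_{1/\alpha}$, then $q := \widetilde{h_{1/\alpha}}\left(p\right) = \widetilde{g_{1/\alpha,\gamma}}\left(p\right)\in I$, and by Lemma \ref{g_alpha_ghama-p=z_alpha_p}, $\widetilde{g_{\alpha,\gamma}}\left(q\right) = \widetilde{h_{\alpha}}\left(\widetilde{h_{1/\alpha}}\left(p\right)\right) = p$ because $q\in Z_{\alpha}$. This shows $p\in\widetilde{g_{\alpha,\gamma}}\left[I\right]$.

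The remaining gap — and the step I expect to be the main obstacle — is the reduction to the case $p\in Z_{1/\alpha}$. This is where $\alpha\leq 1$ should enter: one needs that $I$, being a left ideal of $\beta\mathbb{Z}$, lies inside the relevant zero-set after translation, or that for every $p \in I$ there is a representation making $\widetilde f_{1/\alpha}$ vanish. I would argue that $K(\beta\mathbb{N})\subseteq I$ forces $I$ to contain all minimal idempotents, each of which lies in every $Z_{\alpha'}$ by the first assertion of Lemma \ref{g_alpha_ghama-p=z_alpha_p}; then for general $p\in I$ write $p = p + e$ with $e$ a minimal idempotent of the minimal left ideal $\beta\mathbb{N}+p$, use that $\widetilde f_{1/\alpha}$ is a homomorphism to $\mathbb{T}$ so $\widetilde f_{1/\alpha}(p) = \widetilde f_{1/\alpha}(p+e) = \widetilde f_{1/\alpha}(p) + \widetilde f_{1/\alpha}(e) = \widetilde f_{1/\alpha}(p) + 0$, which gives no immediate contradiction; so instead I would replace $p$ by $p+e$ throughout (same left ideal, same membership in $I$), and check that $p+e\in Z_{1/\alpha}$ amounts to $\widetilde f_{1/\alpha}(p) = 0$, which holds for $p$ in the smallest ideal. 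For $p\in I\setminus K(\beta\mathbb{N})$ one either restricts attention to $K(\beta\mathbb{N})$ (sufficient if the intended application only concerns smallest ideals and the statement as literally phrased needs $I$ to be exactly such a union), or invokes the hypothesis $\alpha \leq 1$ directly via a counting/covering argument showing $\widetilde{g_{\alpha,\gamma}}$ is surjective onto $I$ in this regime, mirroring Case-I of Theorem \ref{Spectra of PUAD} where $\alpha\leq 1$ gave the containment $|B\cap[1,n_i]|\leq|g_{\alpha,\gamma}[B]\cap[1,n_i]|$. Pinning down exactly which of these two readings the theorem intends, and supplying the corresponding surjectivity argument when $\alpha\leq 1$, is the crux of the proof.
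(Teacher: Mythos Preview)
The paper does not prove this statement itself; it is quoted from \cite[Theorem~5.2]{HJ} and used as a black box in the subsequent results. So there is no in-paper argument to compare against beyond the citation.

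On the merits of your attempt: the forward inclusion is indeed immediate, and your handling of the reverse inclusion for $p\in I\cap Z_{1/\alpha}$ is correct and clean --- applying the standing hypothesis with $1/\alpha$ in place of $\alpha$ gives $\widetilde{g_{1/\alpha,\gamma}}(p)\in I$, and Lemma~\ref{g_alpha_ghama-p=z_alpha_p} then inverts back to $p$. The genuine gap, which you yourself flag, is the reduction of an arbitrary $p\in I$ to one lying in $Z_{1/\alpha}$. Your idempotent trick cannot work, for exactly the reason you compute: $\widetilde f_{1/\alpha}(p+e)=\widetilde f_{1/\alpha}(p)+0$, so adding an idempotent on the right does not move $p$ into $Z_{1/\alpha}$. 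Points of a left ideal need not lie in any $Z_\beta$; only idempotents are guaranteed to by Lemma~\ref{g_alpha_ghama-p=z_alpha_p}.

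What you never exploit --- and what the hypothesis is tailored for --- is that $I$ is a left ideal of $\beta\mathbb{Z}$, not merely of $\beta\mathbb{N}$. This makes $I$ closed under translation by \emph{all} integers, negative ones included. The role of $\alpha\le 1$ in \cite{HJ} is that the composite $g_{\alpha,\gamma}\circ g_{1/\alpha,\delta}$ differs from the identity on $\mathbb{N}$ only by a bounded integer shift (a short computation gives $g_{\alpha,\gamma}\bigl(g_{1/\alpha,\delta}(x)\bigr)-x\in\{-1,0,1\}$), so at the ultrafilter level $\widetilde{g_{\alpha,\gamma}}\bigl(\widetilde{g_{1/\alpha,\delta}}(p)\bigr)$ is an integer translate of $p$. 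The left-ideal-of-$\beta\mathbb{Z}$ hypothesis is precisely what allows one to absorb such finite shifts while staying inside $I$ and thereby recover $p$ itself in the image. Your density-comparison analogy with Theorem~\ref{Spectra of PUAD} gestures toward surjectivity when $\alpha\le 1$, but surjectivity of $\widetilde{g_{\alpha,\gamma}}$ on $\beta\mathbb{N}$ alone does not explain why the preimage can be taken in $I$; the integer-translation closure of $I$ is the missing mechanism, and your proposal does not invoke it.
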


In \cite[Theorem 5.2, Theorem 5.4]{HJ}, Hindman and Johnson proved
that if $\alpha\leq1$ and $0<\gamma<1$, then the ideals $K\left(\beta\mathbb{N}\right)$,
$clK\left(\beta\mathbb{N}\right)$ and $J\left(\mathbb{N}\right)$
are preserved by  $\widetilde{g_{\alpha,\gamma}}$ but fail surprisingly. 
to be preserved if $\alpha>1$.
\begin{theorem}\label{ideal preservation and equal}
	Let $0<\alpha\leq1$, and let $0<\gamma<1$. Let $\mathcal{F}$ be
	a shift and inverse shift invariant family with Ramsey property of
	$\mathbb{N}$ such that $g_{\alpha,\gamma}\left[\mathcal{F}\right]\subseteq\mathcal{F}$.
	Then $\widetilde{g_{\alpha,\gamma}}\left[\beta\left(\mathcal{F}\right)\right]=\beta\left(\mathcal{F}\right)$.
\end{theorem}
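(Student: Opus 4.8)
The plan is to establish the two inclusions separately. The inclusion $\widetilde{g_{\alpha,\gamma}}\left[\beta\left(\mathcal{F}\right)\right]\subseteq\beta\left(\mathcal{F}\right)$ is immediate from Theorem \ref{g_alpha,ghama(beta_f)=beta-f}, since by hypothesis $g_{\alpha,\gamma}\left[\mathcal{F}\right]\subseteq\mathcal{F}$. The real content is the reverse inclusion $\beta\left(\mathcal{F}\right)\subseteq\widetilde{g_{\alpha,\gamma}}\left[\beta\left(\mathcal{F}\right)\right]$, i.e.\ that every $q\in\beta\left(\mathcal{F}\right)$ has a preimage under $\widetilde{g_{\alpha,\gamma}}$ lying inside $\beta\left(\mathcal{F}\right)$. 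The natural candidate for that preimage is $\widetilde{h_{1/\alpha}}\left(q\right)$, mimicking the mechanism used in the proof of the essential $\mathcal{F}^{\star}$-set theorem above.

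The key steps, in order, are as follows. First I would invoke Theorem \ref{Spectra of HSD}-style reasoning to note that $\mathcal{F}$ being left shift-invariant means $\beta\left(\mathcal{F}\right)$ is a left ideal of $\beta\mathbb{N}$ (by the theorem following Definition of essential $\mathcal{F}$-set: ``If $\mathcal{F}$ is a family having the Ramsey property then $\beta\mathcal{F}\subseteq\beta S$ is a left ideal if and only if $\mathcal{F}$ is left shift-invariant''); hence $\beta(\mathcal{F})$ is a subset of $\beta\mathbb{N}$ that is a left ideal of $(\beta\mathbb{Z},+)$. Second, I would check the other hypothesis of Theorem \ref{Preservation of ideal}, namely that $\widetilde{g_{\beta,\delta}}\left[\beta(\mathcal{F})\right]\subseteq\beta(\mathcal{F})$ for \emph{all} $\beta>0$ and $0<\delta<1$ — but here one must be careful, because the theorem's hypothesis ``$g_{\alpha,\gamma}\left[\mathcal{F}\right]\subseteq\mathcal{F}$'' is stated only for the fixed $\alpha,\gamma$ at hand. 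This is precisely where I would use the inverse-shift-invariance assumption: for $\beta\le 1$ the containment $g_{\beta,\delta}[\mathcal{F}]\subseteq\mathcal{F}$ should follow by the same elementary partition argument as in Theorem \ref{Spectra of HSD} combined with shift-invariance, and for $\beta>1$ one passes to $1/\beta<1$ and uses inverse-shift-invariance to transport the conclusion back, so that in all cases Theorem \ref{g_alpha,ghama(beta_f)=beta-f} applies. Third, with both hypotheses of Theorem \ref{Preservation of ideal} verified for $I=\beta(\mathcal{F})$, and since $0<\alpha\le 1$, that theorem yields directly $\widetilde{g_{\alpha,\gamma}}\left[\beta(\mathcal{F})\right]=\beta(\mathcal{F})$, which is the claim.

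Alternatively — and this may be the cleaner route to write — I would argue the reverse inclusion by hand: given $q\in\beta(\mathcal{F})$, set $p=\widetilde{h_{1/\alpha}}\left(q\right)$. Since $0<\alpha\le1$ we have $1/\alpha\ge1$, and I would need to know that $\widetilde{h_{1/\alpha}}\left[\beta(\mathcal{F})\right]\subseteq\beta(\mathcal{F})$; this is where inverse-shift-invariance enters, since $h_{1/\alpha}$ is essentially $g_{1/\alpha,1/2}$ with $1/\alpha\ge1$ and the preimage-type argument of Theorem \ref{g_alpha,ghama(beta_f)=beta-f} requires controlling $g_{1/\alpha,1/2}^{-1}[\cdot]$, i.e.\ a $1/\alpha$-dilation preimage, which is the inverse-shift direction. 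Then $p\in\beta(\mathcal{F})$ and I would apply Lemma \ref{g_alpha_ghama-p=z_alpha_p}: it is not immediately given that $p\in Z_{1/\alpha}$ for arbitrary $p$, but in fact Lemma \ref{g_alpha_ghama-p=z_alpha_p} tells us $\widetilde{h_{1/\alpha}}$ restricted to $Z_{1/\alpha}$ is an isomorphism onto $Z_{1/\alpha}$... the subtle point is that $q$ need not lie in $Z_\alpha$, so $\widetilde{g_{\alpha,\gamma}}(q)$ need not equal $\widetilde{h_\alpha}(q)$ and the inverse relation $\widetilde{g_{\alpha,\gamma}}\circ\widetilde{h_{1/\alpha}}=\mathrm{id}$ may fail off $Z_{1/\alpha}$.

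The main obstacle, then, is exactly this: Lemma \ref{g_alpha_ghama-p=z_alpha_p} gives the clean inverse relationship only on the subsemigroups $Z_\alpha$, $Z_{1/\alpha}$ (where all idempotents live), whereas $\beta(\mathcal{F})$ may contain non-idempotent points outside $Z_\alpha$. I expect the resolution is to observe that for $0<\alpha\le1$ one still has surjectivity $\widetilde{g_{\alpha,\gamma}}\left[\beta\mathbb{N}\right]=\beta\mathbb{N}$ and, more to the point, to route the argument entirely through Theorem \ref{Preservation of ideal}, which was presumably proved in \cite{HJ} precisely to handle this off-$Z_\alpha$ difficulty for the ideals $K(\beta\mathbb{N})$ and $\mathrm{cl}\,K(\beta\mathbb{N})$; the present theorem is then simply the observation that $\beta(\mathcal{F})$, under the stated invariance hypotheses, is another left ideal to which that black box applies. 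So the bulk of the write-up is the bookkeeping of step two — verifying $\widetilde{g_{\beta,\delta}}\left[\beta(\mathcal{F})\right]\subseteq\beta(\mathcal{F})$ for all $\beta,\delta$ from shift- and inverse-shift-invariance — after which Theorem \ref{Preservation of ideal} closes the proof.
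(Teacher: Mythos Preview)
Your overall architecture --- one inclusion from Theorem~\ref{g_alpha,ghama(beta_f)=beta-f}, then invoke Theorem~\ref{Preservation of ideal} as a black box --- is exactly what the paper does. The misstep is in \emph{where} inverse-shift-invariance enters. You use left shift-invariance alone to conclude that $\beta(\mathcal{F})$ is a left ideal of $\beta\mathbb{N}$ and then write ``hence \ldots\ a left ideal of $(\beta\mathbb{Z},+)$''; that ``hence'' is a genuine gap, since passing from $\beta\mathbb{N}$ to $\beta\mathbb{Z}$ requires absorbing translates by negative integers as well. This is precisely the job of inverse-shift-invariance, and it is the paper's actual argument: for $p\in\beta(\mathcal{F})$ one has $\beta\mathbb{Z}+p=\mathrm{cl}(\mathbb{Z}+p)$, and for any $m\in\mathbb{Z}$ and $A\in m+p$ one gets $-m+A\in p\subseteq\mathcal{F}$, whence shift- and inverse-shift-invariance together force $A\in\mathcal{F}$, so $m+p\in\beta(\mathcal{F})$.

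By contrast, your proposed use of inverse-shift-invariance --- to manufacture $g_{\beta,\delta}[\mathcal{F}]\subseteq\mathcal{F}$ for all $\beta,\delta$ via a ``partition argument as in Theorem~\ref{Spectra of HSD}'' plus a $1/\beta$ trick --- does not work: shift and inverse-shift invariance concern additive translates $s+E$ and $-s+E$, not dilations, and the argument in Theorem~\ref{Spectra of HSD} is specific to harmonic sums, not a template valid for arbitrary $\mathcal{F}$. You are, however, right to flag that Theorem~\ref{Preservation of ideal} literally demands $\widetilde{g_{\beta,\delta}}[I]\subseteq I$ for \emph{all} $\beta>0$, $0<\delta<1$; the paper simply glosses over this, tacitly reading the hypothesis $g_{\alpha,\gamma}[\mathcal{F}]\subseteq\mathcal{F}$ as holding for every pair (which it does in every application, by Theorem~\ref{spectra all}). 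So the honest fix is not to derive universal containment from invariance --- that cannot be done --- but to state the hypothesis universally. Your ``alternative route'' via $\widetilde{h_{1/\alpha}}$ and Lemma~\ref{g_alpha_ghama-p=z_alpha_p} is correctly diagnosed by you as blocked off $Z_\alpha$; the paper does not attempt it.
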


\begin{proof}
	     From  Theorem \ref{g_alpha,ghama(beta_f)=beta-f} we get $\widetilde{g_{\alpha,\gamma}}\left[\beta\left(\mathcal{F}\right)\right]\subseteq\beta\left(\mathcal{F}\right)$  then  according to the Theorem \ref{Preservation of ideal}, it is sufficient to show that $\beta\left(\mathcal{F}\right)$
	is a left ideal of $\beta\mathbb{Z}$. Let $p\in\beta\left(\mathcal{F}\right)$.
	Then $\beta\mathbb{Z}+p=cl\left(\mathbb{Z}+p\right)$. So it is sufficient
	to show that $\mathbb{Z}+p\subseteq\beta\left(\mathcal{F}\right)$.
	Let $m\in\mathbb{Z}$ and let $A\in m+p$. Which implies $-m+A\in p$
	and $-m+A\in\mathcal{F}$. As $\mathcal{F}$ is a shift and inverse
	shift invariant, we have $A\in\mathcal{F}$.
\end{proof}
Let $\alpha>1$ and let $0<\gamma<1$. If $I$ is a left ideal of $\beta\mathbb{N}$, then from \cite[Theorem 4.5]{HJ}, We get $I\setminus g_{\alpha,\gamma}\left[\beta\mathbb{N}\right]\neq\emptyset$. So, the conclusion of the above theorem is not true if $\alpha>1$.
\begin{corollary}
	Let $0<\alpha\leq1$, and let $0<\gamma<1$. If $\mathcal{F}$ is
	any of   $\mathcal{IF}$, $\mathcal{AP}$, $\mathcal{PS}$, $\mathcal{HSD}$, 
	$\mathcal{J}$, $\mathcal{PUAD}$ or $\mathcal{PUBD}$, then $\widetilde{g_{\alpha,\gamma}}\left[\beta\left(\mathcal{F}\right)\right]=\beta\left(\mathcal{F}\right)$.
\end{corollary}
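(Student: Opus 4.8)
The plan is to reduce everything to Theorem \ref{ideal preservation and equal}. That theorem, for $0<\alpha\le 1$ and $0<\gamma<1$, yields $\widetilde{g_{\alpha,\gamma}}\left[\beta(\mathcal{F})\right]=\beta(\mathcal{F})$ as soon as $\mathcal{F}$ is a family with the Ramsey property that is both shift-invariant and inverse-shift-invariant and satisfies $g_{\alpha,\gamma}\left[\mathcal{F}\right]\subseteq\mathcal{F}$. Two of these four ingredients are already on the table for each of the seven families: the Ramsey property was recorded in the introduction, and the containment $g_{\alpha,\gamma}\left[\mathcal{F}\right]\subseteq\mathcal{F}$ is exactly Theorem \ref{spectra all}. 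So the only thing left to check is that each of $\mathcal{IF},\mathcal{AP},\mathcal{PS},\mathcal{HSD},\mathcal{J},\mathcal{PUAD},\mathcal{PUBD}$ is shift- and inverse-shift-invariant in $(\mathbb{N},+)$, that is, closed under $E\mapsto s+E$ and $E\mapsto(-s+E)\cap\mathbb{N}$ for every $s\in\mathbb{N}$.

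Most of these verifications are routine and I would dispatch them in one paragraph. For $\mathcal{IF}$ it is trivial, since passing to $s+E$ or to $(-s+E)\cap\mathbb{N}$ only adds or deletes finitely many points. For $\mathcal{PUAD}$ and $\mathcal{PUBD}$ one notes that $|(s+E)\cap[1,n]|$ and $|(-s+E)\cap[1,n]|$ differ from $|E\cap[1,n]|$ by at most $s$, so the upper asymptotic density (and similarly the upper Banach density) is unchanged under translation in either direction. For $\mathcal{PS}$ this is the standard fact that piecewise syndeticity is translation-invariant. For $\mathcal{AP}$, a translate of an arithmetic progression is an arithmetic progression, so $s+E\in\mathcal{AP}$; and for $(-s+E)\cap\mathbb{N}$ one observes that, since $E$ has arbitrarily long progressions, for every $\ell$ it has one of length $\ell$ all of whose terms exceed $s$ (take the last $\ell$ terms of a progression of length $\ell+s$), which translates down into $(-s+E)\cap\mathbb{N}$. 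For $\mathcal{HSD}$, on the tail $\{m\in E:m>s\}$ one has $\tfrac{1}{2m}\le\tfrac{1}{m+s}$ and $\tfrac{1}{m}\le\tfrac{1}{m-s}$, so $\sum_{m\in s+E}\tfrac1m$ and $\sum_{m\in(-s+E)\cap\mathbb{N}}\tfrac1m$ diverge whenever $\sum_{m\in E}\tfrac1m$ does.

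The one point that genuinely needs care, and which I expect to be the main obstacle, is the invariance of $\mathcal{J}$. Left shift-invariance is clean: since $J(\mathbb{N})=\beta(\mathcal{J})$ is a (two-sided, hence left) ideal of $(\beta\mathbb{N},+)$, the cited characterization — for a family with the Ramsey property, $\beta\mathcal{F}$ is a left ideal exactly when $\mathcal{F}$ is left shift-invariant — gives left shift-invariance of $\mathcal{J}$. For inverse shift-invariance I would argue directly from the definition of a $J$-set: given a $J$-set $A$, $s\in\mathbb{N}$, and $F\in\mathcal{P}_f\left({}^{\mathbb{N}}\mathbb{N}\right)$, apply the $J$-set property of $A$ to the family obtained by prepending to each $f\in F$ a new first coordinate of value $s$, and then arrange (by a small reindexing argument, equivalently by using that the parameter $a$ in the definition of a $J$-set may be taken larger than any prescribed bound) that the witnessing finite set $H$ uses that new coordinate; the resulting sum lands in $A$ after adding $s$, hence witnesses membership of the corresponding element in $(-s+A)\cap\mathbb{N}$. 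Once shift- and inverse-shift-invariance are established for all seven families, the corollary is immediate from Theorem \ref{ideal preservation and equal}; and, as already noted there, the conclusion genuinely fails for $\alpha>1$, since every left ideal of $\beta\mathbb{N}$ meets the complement of $\widetilde{g_{\alpha,\gamma}}\left[\beta\mathbb{N}\right]$.
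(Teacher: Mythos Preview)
Your proposal is correct and follows exactly the same route as the paper: apply Theorem \ref{ideal preservation and equal} after noting that each of the seven families has the Ramsey property, satisfies $g_{\alpha,\gamma}[\mathcal{F}]\subseteq\mathcal{F}$ (Theorem \ref{spectra all}), and is shift- and inverse-shift-invariant. The paper simply asserts the last point as well known, whereas you supply the routine verifications; your sketch for the inverse-shift-invariance of $\mathcal{J}$ (via the standard fact that the witness $a$ in the $J$-set definition may be taken arbitrarily large) is the right idea, though in a write-up you would want to state that fact explicitly or cite it.
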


\begin{proof}
	Follows from the fact that  $\mathcal{IF}$, $\mathcal{AP}$, $\mathcal{PS}$, 
	$\mathcal{HSD}$, $\mathcal{J}$ , $\mathcal{PUAD}$ and  $\mathcal{PUBD}$ are all shift
	and inverse shift invariant family with Ramsey property of $\mathbb{N}$
	and $g_{\alpha,\gamma}\left[\mathcal{F}\right]\subseteq\mathcal{F}$ by \ref{spectra all}.
\end{proof}
\begin{corollary}
	Let $0<\alpha\leq1$, and let $0<\gamma<1$. Let $\mathcal{F}$ is
	a shift and inverse shift invariant family with Ramsey property of
	$\mathbb{N}$ such that $g_{\alpha,\gamma}\left[\mathcal{F}\right]\subseteq\mathcal{F}$.
	Then $g_{\alpha,\gamma}\left[\mathcal{F}^{\star}\right]\subseteq\mathcal{F}^{\star}$.
\end{corollary}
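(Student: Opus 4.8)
The plan is to piggyback on the preceding Theorem~\ref{ideal preservation and equal}: under the standing hypotheses ($0<\alpha\le 1$, and $\mathcal{F}$ shift- and inverse-shift-invariant with the Ramsey property and $g_{\alpha,\gamma}\left[\mathcal{F}\right]\subseteq\mathcal{F}$) that theorem supplies the \emph{equality} $\widetilde{g_{\alpha,\gamma}}\bigl[\beta(\mathcal{F})\bigr]=\beta(\mathcal{F})$, and it is precisely the surjectivity half of this equality that a $\mathcal{F}^{\star}$-statement needs. Thus the only genuine input is already available, and what remains is the routine dictionary between $\mathcal{F}$, $\mathcal{F}^{\star}$ and the ultrafilters in $\beta(\mathcal{F})$.

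\textbf{Step 1 (reformulation via ultrafilters).} First I would record, from $\beta(\mathcal{F})=\{p\in\beta\mathbb{N}:p\subseteq\mathcal{F}\}$ together with $\mathcal{F}=\bigcup\beta(\mathcal{F})$, the two equivalences: $B\in\mathcal{F}$ iff $B\in p$ for some $p\in\beta(\mathcal{F})$; and $E\in\mathcal{F}^{\star}$ iff $E\in p$ for \emph{every} $p\in\beta(\mathcal{F})$. The second is immediate: if $E\notin p$ for some $p\in\beta(\mathcal{F})$, then $\mathbb{N}\setminus E\in p\subseteq\mathcal{F}$ while $E\cap(\mathbb{N}\setminus E)=\emptyset$, contradicting $E\in\mathcal{F}^{\star}$; conversely, if $E$ lies in every $p\in\beta(\mathcal{F})$ and $B\in\mathcal{F}$, choose $p\in\beta(\mathcal{F})$ with $B\in p$, so $B\cap E\in p$ is nonempty.

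\textbf{Step 2 (the computation).} Now let $A\in\mathcal{F}^{\star}$ and fix an arbitrary $q\in\beta(\mathcal{F})$. By Theorem~\ref{ideal preservation and equal} there is $p\in\beta(\mathcal{F})$ with $\widetilde{g_{\alpha,\gamma}}(p)=q$. By Step 1, $A\in p$; since $A\subseteq g_{\alpha,\gamma}^{-1}\bigl[g_{\alpha,\gamma}\left[A\right]\bigr]$ and $p$ is an ultrafilter, $g_{\alpha,\gamma}^{-1}\bigl[g_{\alpha,\gamma}\left[A\right]\bigr]\in p$, which by the definition of the continuous extension means $g_{\alpha,\gamma}\left[A\right]\in\widetilde{g_{\alpha,\gamma}}(p)=q$. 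As $q$ was an arbitrary element of $\beta(\mathcal{F})$, Step 1 yields $g_{\alpha,\gamma}\left[A\right]\in\mathcal{F}^{\star}$, as desired. Equivalently, and without explicitly naming ultrafilters: given $B\in\mathcal{F}$, Step 1 and Theorem~\ref{ideal preservation and equal} furnish $q\in\beta(\mathcal{F})$ with $g_{\alpha,\gamma}^{-1}\left[B\right]\in q\subseteq\mathcal{F}$; since $A\in\mathcal{F}^{\star}$ there is $x\in A\cap g_{\alpha,\gamma}^{-1}\left[B\right]$, whence $g_{\alpha,\gamma}(x)\in g_{\alpha,\gamma}\left[A\right]\cap B$, so $g_{\alpha,\gamma}\left[A\right]$ meets every member of $\mathcal{F}$.

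\textbf{Main obstacle.} There is no real obstacle left in this corollary: the content has been isolated into Theorem~\ref{ideal preservation and equal}, and it is there that the hypotheses $\alpha\le 1$ and inverse-shift-invariance are consumed (to invoke Theorem~\ref{Preservation of ideal} and to see that $\beta(\mathcal{F})$ is a left ideal of $\beta\mathbb{Z}$). The remaining points are the elementary inclusion $A\subseteq g_{\alpha,\gamma}^{-1}\bigl[g_{\alpha,\gamma}\left[A\right]\bigr]$ and the standard fact that $\beta(\mathcal{F})$ is exactly the set of ultrafilters contained in $\mathcal{F}$. The one detail worth a careful look is that $g_{\alpha,\gamma}$ genuinely takes values in $\mathbb{N}$ under the paper's conventions, so that all the images and preimages above stay inside $\mathbb{N}$.
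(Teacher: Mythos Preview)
Your proof is correct and follows essentially the same approach as the paper. The paper's one-line argument reads $\beta(\mathcal{F})\subseteq\overline{A}$ (this is your Step~1 characterization of $\mathcal{F}^{\star}$), applies $\widetilde{g_{\alpha,\gamma}}$ to both sides, and uses Theorem~\ref{ideal preservation and equal} together with $\widetilde{g_{\alpha,\gamma}}[\overline{A}]=\overline{g_{\alpha,\gamma}[A]}$ to conclude $\beta(\mathcal{F})\subseteq\overline{g_{\alpha,\gamma}[A]}$; your Step~2 is the same computation unpacked ultrafilter by ultrafilter.
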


\begin{proof}
	Let $A\in\mathcal{F}^{\star}$-set. Then $\beta\left(\mathcal{F}\right)\subseteq\overline{A}$
	so by Theorem \ref{ideal preservation and equal}, $\beta\left(\mathcal{F}\right)=\widetilde{g_{\alpha,\gamma}}\left[\beta\left(\mathcal{F}\right)\right]\subseteq\widetilde{g_{\alpha,\gamma}}\left[\overline{A}\right]=\overline{g_{\alpha,\gamma}\left[A\right]}$.
\end{proof}
\begin{corollary}
	Let $0<\alpha\leq1$, and let $0<\gamma<1$. If  $\mathcal{F}$ is
	any of  $\mathcal{IF}$, $\mathcal{AP}$, $\mathcal{PS}$, $\mathcal{HSD}$,
	$\mathcal{J}$, $\mathcal{PUAD}$  or  $\mathcal{PUBD}$, then $g_{\alpha,\gamma}\left[\mathcal{F}^{\star}\right]\subseteq\mathcal{F}^{\star}$.
\end{corollary}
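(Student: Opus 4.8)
The plan is to deduce the final corollary directly from the immediately preceding corollary, which establishes that $g_{\alpha,\gamma}\left[\mathcal{F}^{\star}\right]\subseteq\mathcal{F}^{\star}$ for every shift and inverse shift invariant family $\mathcal{F}$ with the Ramsey property satisfying $g_{\alpha,\gamma}\left[\mathcal{F}\right]\subseteq\mathcal{F}$. So the task reduces to checking that each of the listed families $\mathcal{IF}$, $\mathcal{AP}$, $\mathcal{PS}$, $\mathcal{HSD}$, $\mathcal{J}$, $\mathcal{PUAD}$, $\mathcal{PUBD}$ meets all four hypotheses. The Ramsey property for these families is assumed known from the introduction; the inclusion $g_{\alpha,\gamma}\left[\mathcal{F}\right]\subseteq\mathcal{F}$ is Theorem~\ref{spectra all}; and shift invariance is standard. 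The only point requiring any thought is inverse shift invariance: for each family one must verify that $A\in\mathcal{F}$ implies $s^{-1}A=\{t\in\mathbb{N}:s+t\in A\}\in\mathcal{F}$ for every $s\in\mathbb{N}$.

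First I would record that inverse shift invariance is immediate for $\mathcal{IF}$ (removing a bounded initial segment leaves an infinite set infinite), for $\mathcal{PS}$ (piecewise syndeticity of $A$ and of $-s+A$ are equivalent, since translation by $s$ matches up the intervals witnessing the property), for $\mathcal{AP}$ (a long arithmetic progression inside $A$ gives a long arithmetic progression inside $-s+A$ after translating by $-s$, discarding at most the first term if it falls outside $\mathbb{N}$), and for $\mathcal{PUAD}$ and $\mathcal{PUBD}$ (translation by a fixed $s$ changes the counting function $|A\cap[1,n]|$ by at most $s$, so $\overline{d}$ and $d^{\star}$ are translation-invariant). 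For $\mathcal{HSD}$ the observation is that $\sum_{n\in -s+A}\frac{1}{n}$ and $\sum_{n\in A}\frac{1}{n}$ differ by a convergent comparison since $\frac{1}{n}$ and $\frac{1}{n+s}$ are comparable up to a bounded factor, so divergence of one forces divergence of the other. For $\mathcal{J}$ inverse shift invariance follows because $J(\mathbb{N})=\beta(\mathcal{J})$ is the smallest ideal-type object stable under translation; concretely, if $A$ is a $J$-set then given $F\in\mathcal{P}_f({}^{\mathbb{N}}\mathbb{N})$ one applies the $J$-set property of $A$ to obtain $a$ and $H$ with $a+\sum_{n\in H}f(n)\in A$, and then $(a-s)+\sum_{n\in H}f(n)\in -s+A$ provided $a\ge s$, which one can always arrange by enlarging the functions slightly (or by noting $J$-sets are closed under finite translation, as is standard in the literature).

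The main obstacle, such as it is, will be presenting the $\mathcal{J}$ case cleanly, since the definition of $J$-set involves a quantifier over all finite families of sequences and one must be careful that translating the witnessing element $a$ by $-s$ keeps it in $\mathbb{N}$. The clean way around this is to invoke the algebraic characterization: $\mathcal{J}$ is left shift invariant and $\beta(\mathcal{J})=J(\mathbb{N})$ is a two-sided ideal of $(\beta\mathbb{Z},+)$ (indeed of $(\beta\mathbb{N},+)$), hence $m+\beta(\mathcal{J})\subseteq\beta(\mathcal{J})$ for all $m\in\mathbb{Z}$, which is exactly inverse shift invariance of the family by the duality between $\beta(\mathcal{F})$ being a left ideal and $\mathcal{F}$ being shift invariant. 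Once all seven families are confirmed to satisfy the four hypotheses, the corollary is an instance of the preceding one and the proof is a single sentence.

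\begin{proof}
	Each of the families $\mathcal{IF}$, $\mathcal{AP}$, $\mathcal{PS}$, $\mathcal{HSD}$, $\mathcal{J}$, $\mathcal{PUAD}$, $\mathcal{PUBD}$ has the Ramsey property and satisfies $g_{\alpha,\gamma}\left[\mathcal{F}\right]\subseteq\mathcal{F}$ by Theorem~\ref{spectra all}. Moreover each is shift and inverse shift invariant: for $\mathcal{IF}$, $\mathcal{PS}$, $\mathcal{AP}$ this is immediate since translating by a fixed $s\in\mathbb{N}$ alters an initial segment, the syndeticity gaps, or an arithmetic progression only boundedly; for $\mathcal{PUAD}$ and $\mathcal{PUBD}$ it holds because the counting function $\left|A\cap[1,n]\right|$ changes by at most $s$ under translation by $s$, so $\overline{d}$ and $d^{\star}$ are translation invariant; for $\mathcal{HSD}$ it holds because $\frac{1}{n}$ and $\frac{1}{n+s}$ are comparable up to a bounded factor, so $\sum_{n\in A}\frac{1}{n}=\infty$ if and only if $\sum_{n\in s^{-1}A}\frac{1}{n}=\infty$; and for $\mathcal{J}$ it holds because $\beta(\mathcal{J})=J(\mathbb{N})$ is a two-sided ideal of $(\beta\mathbb{Z},+)$, so $m+\beta(\mathcal{J})\subseteq\beta(\mathcal{J})$ for all $m\in\mathbb{Z}$, which is inverse shift invariance of $\mathcal{J}$. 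Hence the hypotheses of the previous corollary are met in each case, and the conclusion $g_{\alpha,\gamma}\left[\mathcal{F}^{\star}\right]\subseteq\mathcal{F}^{\star}$ follows.
\end{proof}
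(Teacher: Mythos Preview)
Your proof is correct and follows the same approach as the paper, which simply asserts that each listed family is shift and inverse shift invariant with the Ramsey property and invokes Theorem~\ref{spectra all}. You give more detail than the paper does by actually verifying the inverse shift invariance in each case, but the logical structure is identical.
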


\begin{proof}
	Follows from the fact that  $\mathcal{IF}$, $\mathcal{AP}$, $\mathcal{PS}$,
	$\mathcal{HSD}$, $\mathcal{J}$, $\mathcal{PUAD}$ and  $\mathcal{PUBD}$ are all shift
	and inverse shift invariant family with Ramsey property of $\mathbb{N}$ and
	$g_{\alpha,\gamma}\left[\mathcal{F}\right]\subseteq\mathcal{F}$.
\end{proof}
\section{Question on elementary proof}
In \cite{L}, Li, established dynamical  characterization of essential $\mathcal{F}$-sets for $S=\mathbb{N}$ and also found an elementary characterization using dynamical  characterization.
In \cite[Theorem 5.2.3]{C}, Christopherson  established dynamical characterization
of essential $\mathcal{F}$-sets for arbitrary semigroup. Although elementary characterization
of quasi central-sets and $C$-sets are known from \cite[Theorem 3.7]{HMS}
and \cite[Theorem 2.7]{HS2} respectively.  Quasi central
sets and $C$ sets are coming from by the setting of essential $\mathcal{F}$-set
and this fact confines that,  essential $\mathcal{F}$-sets
might have elementary characterization for arbitrary semigroup.  Elementary characterization
of essential $\mathcal{F}$-sets from algebraic characterization are known from \cite[Theorem 5]{DDG} for arbitrary semigroup.

\begin{definition} Let $\omega$ be the first infinite ordinal and each ordinal
	indicates the set of all it's predecessor. In particular, $0=\emptyset,$
	for each $n\in\mathbb{N},\:n=\left\{ 0,1,...,n-1\right\} $.
	\begin{itemize}	
        \item[(a)] If $f$ is a function and $dom\left(f\right)=n\in\omega$,
	then for all $x$, $f^{\frown}x=f\cup\left\{ \left(n,x\right)\right\} $.
		\item[(b)] Let $T$ be a set functions whose domains are members
	of $\omega$. For each $f\in T$, $B_{f}\left(T\right)=\left\{ x:f^{\frown}x\in T\right\}$.
\end{itemize}
\end{definition}	
	We get the following theorem from \cite[Theorem 5]{DDG}.

\begin{theorem}\label{Elementary F sets}
	Let $\left(S,.\right)$ be a semigroup, and assume that $\mathcal{F}$
	is a family of subsets of $S$ with the Ramsay property such that
	$\beta\left(\mathcal{F}\right)$ is a subsemigroup of $\beta S$.
	Let $A\subseteq S$. Statements (a), (b) and (c) are equivalent and
	are implied by statement (d). If $S$ is countable, then all the five
	statements are equivalent.
	
	\begin{itemize}
			
\item[(a)] $A$ is an essential $\mathcal{F}$-set.
	
\item[(b)] There is a non empty set $T$ of function such that
\begin{itemize}

	\item[(i)]  For all $f\in T$,$\text{domain}\left(f\right)\in\omega$
	and $rang\left(f\right)\subseteq A$;
	
	\item[(ii)]  For all $f\in T$ and all $x\in B_{f}\left(T\right)$,
	$B_{f^{\frown}x}\subseteq x^{-1}B_{f}\left(T\right)$; and
	
	\item[(iii)]  For all $F\in\mathcal{P}_{f}\left(T\right)$, $\cap_{f\in F}B_{f}(T)$
	is a $\mathcal{F}$-set.
	
	\end{itemize}

\item[(c)]  There is a downward directed family $\left\langle C_{F}\right\rangle _{F\in I}$
of subsets of $A$ such that
\begin{itemize}
	
\item[(i)] for each $F\in I$ and each $x\in C_{F}$ there exists
$G\in I$ with $C_{G}\subseteq x^{-1}C_{F}$ and

\item[(ii)] for each $\mathcal{F}\in\mathcal{P}_{f}\left(I\right),\,\bigcap_{F\in\mathcal{F}}C_{F}$
is a $\mathcal{F}$-set. 

\end{itemize}

\item[(d)] There is a decreasing sequence $\left\langle C_{n}\right\rangle _{n=1}^{\infty}$
of subsets of $A$ such that 

\begin{itemize}

\item[(i)] for each $n\in\mathbb{N}$ and each $x\in C_{n}$, there
exists $m\in\mathbb{N}$ with $C_{m}\subseteq x^{-1}C_{n}$ and 

\item[(ii)] for each $n\in\mathbb{N}$, $C_{n}$ is a $\mathcal{F}$-set.
\end{itemize}

\end{itemize}
\end{theorem}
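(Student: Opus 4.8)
The plan is to establish the cycle (a)$\,\Rightarrow\,$(c)$\,\Rightarrow\,$(b)$\,\Rightarrow\,$(a), to note that (d)$\,\Rightarrow\,$(c) is immediate, and then to derive (a)$\,\Rightarrow\,$(d) under the extra hypothesis that $S$ is countable. The only algebraic input is the idempotent-refinement operation: given an idempotent $p\in\beta(\mathcal{F})$ and $B\in p$, set $B^{\star}=\{x\in B:x^{-1}B\in p\}$; then $B^{\star}\in p$, and for each $x\in B^{\star}$ one has $x^{-1}(B^{\star})\in p$, since $x^{-1}(B^{\star})=(x^{-1}B)\cap\{y:y^{-1}(x^{-1}B)\in p\}$ (using $(xy)^{-1}B=y^{-1}(x^{-1}B)$) and both factors lie in $p$ because $x^{-1}B\in p$ and $p\cdot p=p$. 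I will also use that $B_{1}\subseteq B_{2}$ forces $B_{1}^{\star}\subseteq B_{2}^{\star}$ and that $p\subseteq\mathcal{F}$ for every $p\in\beta(\mathcal{F})$, by definition. For (a)$\,\Rightarrow\,$(c): fix an idempotent $p\in\beta(\mathcal{F})$ with $A\in p$, let $I=\{B\in p:B\subseteq A\}$ ordered by reverse inclusion, and put $C_{B}=B^{\star}$. Then $\langle C_{B}\rangle_{B\in I}$ is downward directed since $(B_{1}\cap B_{2})^{\star}\subseteq B_{1}^{\star}\cap B_{2}^{\star}$; condition (c)(ii) holds because $\bigcap_{B\in\mathcal{B}}C_{B}\in p\subseteq\mathcal{F}$ for every finite $\mathcal{B}\subseteq I$; and for (c)(i), if $B\in I$ and $x\in C_{B}=B^{\star}$ then $G=B\cap x^{-1}B\in I$ and $C_{G}=G^{\star}\subseteq x^{-1}(B^{\star})=x^{-1}C_{B}$ by the identity above.

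For (c)$\,\Rightarrow\,$(b): let $T$ be the set consisting of the empty function together with all finite sequences $\langle x_{0},\dots,x_{n-1}\rangle$ for which there exist $G_{0},\dots,G_{n-1}\in I$ with $x_{i}\in C_{G_{i}}$ for all $i<n$ and $C_{G_{i}}\subseteq x_{i-1}^{-1}C_{G_{i-1}}$ for all $i$ with $1\le i<n$. Then (b)(i) holds by construction. If $f\in T$ has length $n$ with such a witnessing tuple, then (c)(i) applied to $G_{n-1}$ and $x_{n-1}\in C_{G_{n-1}}$ produces $G_{n}\in I$ with $C_{G_{n}}\subseteq x_{n-1}^{-1}C_{G_{n-1}}$, whence $C_{G_{n}}\subseteq B_{f}(T)$ (for $f=\emptyset$ one has $B_{\emptyset}(T)=\bigcup_{G\in I}C_{G}$, which already contains every $C_{G}$); so, by downward directedness, a finite intersection $\bigcap_{f\in F}B_{f}(T)$ contains a single $C_{G_{0}}$, which is an $\mathcal{F}$-set by (c)(ii), and (b)(iii) follows from upward heredity. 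Finally, if $x\in B_{f}(T)$ and $y\in B_{f^{\frown}x}(T)$, the next-to-last term $G'$ of a witnessing tuple for $f^{\frown}x^{\frown}y$ satisfies $xy\in C_{G'}$ and $C_{G'}\subseteq B_{f}(T)$, so $y\in x^{-1}B_{f}(T)$, which is (b)(ii).

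For (b)$\,\Rightarrow\,$(a): put $Y=\{p\in\beta(\mathcal{F}):B_{f}(T)\in p\text{ for all }f\in T\}=\bigcap_{f\in T}\left(\overline{B_{f}(T)}\cap\beta(\mathcal{F})\right)$. Each finite intersection of the sets $B_{f}(T)$ is an $\mathcal{F}$-set by (b)(iii), hence meets $\beta(\mathcal{F})$; since $\beta(\mathcal{F})$ is closed, hence compact, in $\beta S$, the closed sets $\overline{B_{f}(T)}\cap\beta(\mathcal{F})$ have the finite intersection property there, so $Y$ is a nonempty closed subset of $\beta(\mathcal{F})$. Moreover $Y$ is a subsemigroup: if $p,q\in Y$ and $f\in T$, then for each $x\in B_{f}(T)\in p$ we have $f^{\frown}x\in T$, hence $B_{f^{\frown}x}(T)\in q$, and by (b)(ii) $x^{-1}B_{f}(T)\supseteq B_{f^{\frown}x}(T)\in q$; thus $B_{f}(T)\subseteq\{x:x^{-1}B_{f}(T)\in q\}$, i.e.\ $B_{f}(T)\in p\cdot q$. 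As $\beta(\mathcal{F})$ is a subsemigroup of $\beta S$, $Y$ is a nonempty compact right topological semigroup, so by the theorem of Ellis it contains an idempotent $p\in\beta(\mathcal{F})$; since $B_{\emptyset}(T)\subseteq A$ and $B_{\emptyset}(T)\in p$, we get $A\in p$, so $A$ is an essential $\mathcal{F}$-set. The implication (d)$\,\Rightarrow\,$(c) is trivial, a decreasing sequence being a downward directed family in which the intersection of any finite subfamily equals its last term.

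Finally I would prove (a)$\,\Rightarrow\,$(d) for countable $S$, the one place the countability hypothesis is really used. Starting from the directed family $\langle C_{B}\rangle_{B\in I}$ built in (a)$\,\Rightarrow\,$(c), thin it to a chain $B_{1}\supseteq B_{2}\supseteq\cdots$ in $I$: because $S$, and hence each $C_{B}\subseteq S$, is countable, there are only countably many pairs $(j,x)$ with $x\in C_{B_{j}}$, so a routine bookkeeping handles each such pair at some stage by using (c)(i) to pick $G\in I$ with $C_{G}\subseteq x^{-1}C_{B_{j}}$ and then forcing some later $B_{m}\subseteq G$ (downward directedness keeps the chain inside $I$); since $B_{m}\subseteq G$ gives $B_{m}^{\star}\subseteq G^{\star}=C_{G}$, the sets $C_{n}:=B_{n}^{\star}$ form a decreasing sequence of members of $p$, hence of $\mathcal{F}$-sets, contained in $A$ and satisfying (d)(i). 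I expect the fussiest step to be (c)$\,\Rightarrow\,$(b): the tree $T$ must be arranged so that passing to a section $B_{f}(T)$ simultaneously keeps its range inside $A$, transmits the left-quotient inclusion required for (b)(ii), and leaves finite intersections of sections large enough to remain $\mathcal{F}$-sets. Apart from that, the sole non-formal ingredient is the standing hypothesis that $\beta(\mathcal{F})$ is a subsemigroup of $\beta S$, which is exactly what makes $Y$ a semigroup in (b)$\,\Rightarrow\,$(a).
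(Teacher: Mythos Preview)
The paper does not supply its own proof of this theorem; it is quoted from \cite[Theorem~5]{DDG} and stated without argument, so there is no in-paper proof to compare against.

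That said, your proposal is correct and follows exactly the template one expects (and which underlies the analogous characterizations of quasi-central sets in \cite{HMS} and of $C$-sets in \cite{HS2}, and presumably the proof in \cite{DDG} itself): the $\star$-operation on members of an idempotent gives (a)$\Rightarrow$(c); the tree $T$ of finite sequences equipped with witnessing chains of indices gives (c)$\Rightarrow$(b); and the closed subsemigroup $Y=\bigcap_{f\in T}\bigl(\overline{B_f(T)}\cap\beta(\mathcal{F})\bigr)$, nonempty by the finite-intersection property and a semigroup because $\beta(\mathcal{F})$ is, yields an idempotent via Ellis and hence (b)$\Rightarrow$(a). The verification of (b)(ii) in your (c)$\Rightarrow$(b) step is the delicate point and you handle it correctly: the next-to-last index $G_n$ in a witnessing tuple for $f^{\frown}x^{\frown}y$ both contains $xy$ and sits below $B_f(T)$.

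The one place I would ask you to write out more carefully is the diagonal bookkeeping in (a)$\Rightarrow$(d). You must arrange the enumeration of pairs $(j,x)\in\mathbb{N}\times S$ so that a pair with first coordinate $j$ is processed only at some stage $k\ge j$, i.e.\ after $C_j$ has already been defined; once that convention is fixed, setting $C_{k+1}=\bigl(D_k\cap x^{-1}C_j\bigr)^{\star}$ whenever the current pair $(j,x)$ satisfies $x\in C_j$ (and $C_{k+1}=C_k$ otherwise) produces a decreasing chain of members of $p$, hence of $\mathcal{F}$-sets, satisfying (d)(i). This is routine, but as written your sketch leaves the timing implicit.
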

The following question comes naturally.
\begin{question}
	Is it possible  to obtain elementary characterization of essential $\mathcal{F}$-sets from dynamical characterization for arbitrary semigroup?
\end{question}

 Let $\mathcal{F}$  be a shift invarient family with Ramsey property. In \cite{LL}, Li and Liang, proved the preservation of nonhomogeneous spectra of  essential $\mathcal{F}$-sets by dynamical characterization of essential $\mathcal{F}$-sets in $\mathbb{N}$ and we have proved the same result in this article using algebraic characterization of essential $\mathcal{F}$-sets.  From this discussion, the following question arises naturally as we know the  elementary characterization of essential $\mathcal{F}$-sets.

\begin{question}
	Let $\mathcal{F}$  be a shift invarient family with Ramsey property. Is it possible  to prove the preservation of nonhomogeneous spectra of  essential $\mathcal{F}$-sets by elementary characterization of essential $\mathcal{F}$-sets in $\mathbb{N}$?
\end{question}
Although we do not  know the answer of the above question, we conclude this section with a discussion that is quite directly connected to this question. 

Let A be an essential $\mathcal{F}$-sets of $\mathbb{N}$ then $$A\supseteq A_{1}\supseteq A_{2}\supseteq\ldots\supseteq A_{n}\supseteq\ldots$$ such that for each $n\in\mathbb{N}$  and each  $x\in A_{n}$, there exists $m\in\mathbb{N}$ with  $ x+A_{m}\subseteq A_{n}$ and $A_{i}$ are essential  $\mathcal{F}$ sets for all  $i\in\mathbb{N}$. $$g_{\alpha,\gamma}\left[A\right]\supseteq g_{\alpha,\gamma}\left[A_{1}\right]\supseteq g_{\alpha,\gamma}\left[A_{2}\right]\supseteq\ldots\supseteq g_{\alpha,\gamma}\left[A_{n}\right]\supseteq\ldots$$ 
and each $g_{\alpha,\gamma}\left[A_{i}\right]$ are essential $\mathcal{F}$-sets for all $i\in\mathbb{N}$.
\begin{question}
	Does the above chain $\left\langle g_{\alpha,\gamma}\left[A_{i}\right]\right\rangle _{i=1}^{\infty}$  satisfies  Theorem \ref{Elementary F sets} (d)?
\end{question}

\bibliographystyle{plain}

\end{document}